\numberwithin{figure}{section}
\numberwithin{table}{section}
\numberwithin{equation}{section}
\newenvironment{abstr}[1]{ \vspace{.05in}\footnotesize
	\parindent .2in
	{\upshape\bfseries #1. }\ignorespaces}{\par\vspace{.1in}}
\newenvironment{Abstract}{\begin{abstr}{Abstract}}{\end{abstr}}
\newenvironment{keywords}{\begin{abstr}{Key words}}{\end{abstr}}
\newenvironment{AMS}{\begin{abstr}{AMS subject classifications}}{\end{abstr}}
\newtheorem{theorem}{Theorem}[section]
\newtheorem{lemma}[theorem]{Lemma}
\newtheorem{corollary}[theorem]{Corollary}
\newtheorem{proposition}[theorem]{Proposition}
\newtheorem{assumption}[theorem]{Assumption}
\theoremstyle{definition}
\newtheorem{example}[theorem]{Example}
\newtheorem{remark}[theorem]{Remark}
\DeclareMathOperator{\diam}{diam}
\newcommand{\nz}{\mathbb{N}}       
\newcommand{\rz}{\mathbb{R}}       
\newcommand\VA{\mathbf{A}}
\newcommand\CA{\mathcal{A}}
\newcommand\CB{\mathcal{B}}
\newcommand\CQ{\mathcal{Q}}
\newcommand\CS{\mathcal{S}}
\newcommand\CT{\mathcal{T}}
\newcommand{\UN}{\textup{N}}
\begin{document}
	
	\title{Numerical homogenization for nonlinear strongly monotone problems%
	\thanks{Major parts of this work were carried out while the author was affiliated with the University of Augsburg. Further, the work conducted at KIT was funded by the Deutsche Forschungsgemeinschaft (DFG, German Research Foundation) -- Project-ID 258734477 -- SFB 1173 and by the Federal Ministry of Education and Research (BMBF) and the Baden-W\"urttemberg Ministry of Science as part of the Excellence Strategy of the German Federal and State Governments.}
	}
	\author{Barbara Verf\"urth\footnotemark[2]}
	\date{}
	\maketitle
	
	\renewcommand{\thefootnote}{\fnsymbol{footnote}}
	\footnotetext[2]{Institut für Angewandte und Numerische Mathematik, Karlsruher Institut f\"ur Technologie, Englerstr.~2, D-76131 Karlsruhe}
	\renewcommand{\thefootnote}{\arabic{footnote}}
	
	\begin{Abstract}
	In this work we introduce and analyze a new multiscale method for strongly nonlinear monotone equations in the spirit of the Localized Orthogonal Decomposition.
	A problem-adapted multiscale space is constructed by solving linear local  fine-scale problems which is then used in a generalized finite element method.
	The linearity of the fine-scale problems allows their localization and, moreover, makes the method very efficient to use.
	The new method gives optimal a priori error estimates up to linearization errors. The results neither require structural assumptions on the coefficient such as periodicity or scale separation nor higher regularity of the solution. 
	The effect of different linearization strategies is discussed in theory and practice.
	Several numerical examples including stationary Richards equation confirm the theory and underline the applicability of the method.
	\end{Abstract}
	
	\begin{keywords}
	multiscale method; numerical homogenization; nonlinear monotone problem;
	a priori error estimates
	\end{keywords}

	\begin{AMS}
	65N15, 65N30, 35J60, 74Q15	
	\end{AMS}

	\section{Introduction}
\label{sec:introduction}
Linear constitutive laws like Hooke's law in mechanics, Ohm's law in electromagnetics, or Darcy's law in fluid flow are very popular, but they are often not accurate enough in practical applications, for instance for high intensities.
Instead, nonlinear effects in the constitutive laws have to be taken into account which are often experimentally found and determined, see \cite{Zeidler} for a general overview.
In this article, we consider as model problem the following nonlinear monotone elliptic equation 
\[-\nabla \cdot \bigl( A(x, \nabla u)\bigr) =f,\]
where the exact assumptions as well as boundary conditions are specified later.
It is a representative model problem for quasilinear partial differential equations (PDEs) as they occur in mean curvature flow or for non-Newtonian fluids.
The transition from linear to nonlinear problems comes with huge additional challenges for the numerical treatment and analysis. As an illustrating example we mention optimal order $L^2$-estimates for the finite element method: The classical Aubin-Nitsche trick for linear problems is not applicable, so that, for a long time, only optimal order estimates in the energy norm \cite{Ciarlet} were known, see \cite{AV12quasilin} and the discussion therein. A similar observation applies to the effect of numerical integration, see~\cite{FZ87femnonlinear}.

With the view on practical applications such as fluid flow or elasticity, we do not only have to consider nonlinear constitutive laws as discussed above, but also have to consider (spatial) multiscale features in the material coefficients (here, in $A$). For instance, a fluid such as groundwater flows over large distances, while the properties of the soil changes over small distances, see, e.g.,~\cite{Richards}.
Hence, for applications such as the (quasilinear) porous medium equation, $A$ is subject to rapid variations and/or discontinuities on fine spatial scales or  even a cascade of (non-separable) scales. 
This coincidence of multiscale features and nonlinear material laws makes the problem intractable for standard methods.
For example, the finite element method \cite{AH16nonlinelliptic,Ciarlet,FZ87femnonlinear} will only give optimal convergence in the asymptotic regime, i.e., if the mesh resolves all features and scales present, which is prohibitively expensive even with today's computational resources.

In the case of spatially periodic $A$ (with period $\varepsilon\ll 1$), homogenization results using two-scale convergence \cite{All92twoscale,LNW02twoscale} prove that the solutions of the above model problem converge to the solution of an again monotone elliptic (homogenized) problem for $\varepsilon\to 0$ .
The nonlinear effective diffusion tensor can be computed by solving nonlinear so-called cell problems.
The (finite element) heterogeneous multiscale method is inspired by this analytical process and it is studied successfully for nonlinear problems in a series of papers \cite{ABV15rbhmmquasilinelliptic,AH16fehmmnonlinparabolic,AHV15nonlinparaboliclinearized,AV14fehmmquasilinelliptic,Henn11phdhmm,HO15hmmmonotone}.
In most cases, the macroscopic nonlinear form involves nonlinear reconstruction operators which require the solution of nonlinear cell problems at each macroscopic quadrature point to incorporate fine-scale information.
For parabolic equations, \cite{AHV15nonlinparaboliclinearized} linearizes the macroscopic and cell computations using information from the previous time step.
The sparse multiscale FEM \cite{Hoa08sparsefemnonlinear}  tries to reduce the complexity of solving cell problems and a homogenized equation by the introduction of sparse approximations.
Another idea is to modify or enrich the standard finite element basis by problem-adapted functions. This is used in the (generalized) multiscale finite element method, for which nonlinear problems are discussed in \cite{CESY17gmsfemnonlinear,EGLP14gmsfemnonlin,EHG04msfem}.
Again nonlinear problems have to be solved locally to construct the problem-adapted functions.

The main contribution of this article is the introduction of a new multiscale method for nonlinear strongly monotone problems and its numerical analysis. The idea is to construct a multiscale  space by solving local fine-scale problems in the spirit of the Localized Orthogonal Decomposition (LOD) \cite{MP14LOD,Pet15LODreview}.
In contrast to the above discussed methods, the basis construction only requires the solution of \emph{linear} problems and hence is embarrassingly easy.
Moreover, this linearization idea drastically reduces the computational effort for generating a problem-dependent basis and thereby provides a conceptually new view on the treatment of nonlinear multiscale problems.
We derive optimal convergence rates (with respect to the mesh size $H$) up to linearization errors without any assumption on the regularity of the exact solution or special properties such as periodicity or scale separation for the coefficient.
The occurring linearization errors and resulting possible choices of the linearization are discussed and compared.
Extensive numerical experiments show the good performance of the method in agreement with the theoretical estimates. 
We study periodic as well as completely random multiscale coefficients and also include a model for stationary Richards equation with a high contrast channel.
Besides several linear problem classes, the LOD has already been studied for semilinear equations \cite{HMP14lodsemilinear} and a nonlinear eigenvalue problem related to the Gross-Pitaevskii equation \cite{HMP14lodgrosspit}.
These problems, however, are only semilinear and can therefore be handled easier. Yet, we emphasize that these previous works can be re-interpreted in the current framework.
We mention the close connections of the LOD to (analytical) homogenization \cite{GP17lodhom,PVV19homloddomaindecomp}, domain decomposition iterative solvers \cite{KPY16LODiterative,KY16LODiterative,PVV19homloddomaindecomp}, and so-called gamblets \cite{Owhadi2017,OZ11gfem}. 
Hence, the current approach can give interesting and useful insights in these areas for nonlinear problems in the future as well.

The article is organized as follows: Section \ref{sec:setting} introduces the setting and the standard finite element discretization.
We introduce the multiscale method including linearization and localization in Section~\ref{sec:lod}. The arising errors are analyzed in Section~\ref{sec:error}. Finally, we present several numerical experiments confirming our theory and showing possible applications in Section \ref{sec:numexp}.

\section{Problem formulation and discretization}
\label{sec:setting}
In this section we formulate the considered model problem and introduce necessary finite element prerequisites.
We use standard notation on Sobolev spaces.
Throughout the whole article, let $\Omega\subset \mathbb{R}^d$ be a bounded Lipschitz domain.
For a subdomain $D\subset \Omega$,
let $\|\cdot \|_{0, D}$, $\|\cdot\|_{1, D}$, and $|\cdot|_{1, D}$ denote the standard $L^2(D)$-norm, $H^1(D)$-norm, and $H^1(D)$-semi norm, respectively. 
Furthermore, $(\cdot, \cdot)_D$ denotes the standard $L^2$ scalar product on $D$.
We will omit the subscript $D$ if it equals the full domain $\Omega$.

\subsection{Model problem}
\label{subsec:problems}
We consider the following nonlinear elliptic problem: Find $u:\Omega\to \mathbb{R}$ such that
\begin{equation}
	\label{eq:nonlinerastrong}
	\begin{aligned}
		-\nabla \cdot \bigl( A(x, \nabla u)\bigr) &=f &&\text{in }\Omega,\\
		u&=0&&\text{on }\partial \Omega
	\end{aligned}
\end{equation}
with a right-hand side $f\in L^2(\Omega)$.
The corresponding weak formulation, with which we will work in the following, reads: Find $u\in H^1_0(\Omega)$ such that
\begin{equation}
	\label{eq:nonlinearweak}
	\CB(u;v):=\bigl(A(x, \nabla u), \nabla v\bigr)_\Omega = (f, v)_\Omega \qquad \text{for all}\quad v\in H^1_0(D).
\end{equation}
For simplicity, we restrict ourselves to homogeneous Dirichlet boundary conditions, but non-homo\-ge\-neous and Neumann boundary conditions could be treated as well, see~\cite{HM14LODbdry}.
Moreover, we focus on nonlinearities in the highest derivative only, additional (nonlinear) low-order terms can easily be handled as well, cf.~\cite{HMP14lodsemilinear}.
We now specify our assumptions on $A$.
\begin{assumption}
	\label{asspt:monotone}
	The nonlinearity $A:\Omega\times \mathbb{R}^d\to \mathbb{R}^d$ satisfies
	\begin{enumerate}
		\item $A(\cdot, \xi)\in L^\infty(\Omega;\mathbb{R}^d)$ for all $\xi\in\rz^d$ and $A(x, \cdot)\in C^1(\rz^d;\rz^d)$ for almost every $x\in \Omega$;
		\item  there is $\Lambda>0$ such that $|A(x, \xi_1)-A(x,\xi_2)|\leq \Lambda |\xi_1-\xi_2|$ for almost every $x\in \Omega$ and all $\xi_1, \xi_2\in \rz^d$;
		\item there is $\lambda>0$ such that $\bigl(A(x, \xi_1)-A(x, \xi_2)\bigr)\cdot (\xi_1-\xi_2)\geq \lambda |\xi_1-\xi_2|^2$ for almost every $x\in \Omega$ and all $\xi_1, \xi_2\in \rz^d$;
		\item $|A(x, 0)|\leq C_0$ for almost every $x\in \Omega$.
	\end{enumerate}
\end{assumption}

Assumption \ref{asspt:monotone} implies that 
\[|\CB(v_1;\psi)-\CB(v_2;\psi)|\leq \Lambda |v_1-v_2|_1\, |\psi|_1\qquad\text{and}\qquad \CB(v;v-\psi)-\CB(\psi;v-\psi)\geq \lambda|v-\psi|^2_1\]
for all $v, v_1, v_2,\psi\in H^1_0(\Omega)$.
Therefore, the model problem \eqref{eq:nonlinearweak} has a unique solution $u\in H^1_0(\Omega)$, which satisfies
\begin{equation}
	\label{eq:stabu}
	|u|_1\leq \Lambda/\lambda \|f\|_0,
\end{equation}
see~\cite[Chapter 5]{Ciarlet}.
As discussed in the introduction, we implicitly assume that $A$ is subject to rapid oscillations or discontinuities on a rather fine scale with respect to the spatial variable $x$.

We write $a\lesssim b$ in short for $a\leq C b$ with a constant C independent of the mesh size $H$ and the oversampling parameter $m$ introduced later. 
However, $C$ may depend on the monotonicity and Lipschitz constants $\lambda, \Lambda$ of $A$ (cf.\ Assumption \ref{asspt:monotone}).

\subsection{Finite element discretizations}
\label{subsec:fem}
We cover $\Omega$ with a regular mesh $\CT_H$ consisting of simplices; however, a mesh with quadrilaterals would equally be possible.
The mesh is assumed to be shape regular in the sense that the aspect ratio of the elements of $\CT_H$ is bounded uniformly from below.
We introduce the mesh size $H=\max_{T\in \CT_H}\diam T$ and assume that this is rather coarse, in particular, $\CT_H$ does not resolve the possible heterogeneities in $A$.
We discretize the space $H^1_0(\Omega)$ with the lowest order Lagrange elements over $\CT_H$, and denote this space by $V_H$.
This means that $V_H=H^1_0(\Omega)\cap \CS^1(\CT_H)$, where $\CS^1(\CT_H)$ denotes the space of element-wise polynomials of total degree $\leq 1$.

The standard finite element method now seeks a (discrete) solution $u_H\in V_H$ such that
\[\CB(u_H; v_H)=(f,v_H)_\Omega \qquad \text{for all}\quad v_H\in V_H.\]
This results in a nonlinear system which can be (approximatively) solved via an iteration such as Newton's method.
It is well-known that  the properties of $A$ and Galerkin orthogonality imply
\begin{equation}
	\label{eq:quasiopt}
	|u-u_H|_1\lesssim \inf_{v_H\in V_H}|u-v_H|_1,
\end{equation}
see~\cite[Chapter 5]{Ciarlet}.
This quasi-optimality by the way holds for any conforming subset $\tilde{V}_H\subset H^1_0(\Omega)$.
For the standard finite element method (with linear elements) it is furthermore well-known to have the following error estimates 
\[\|u-u_H\|_1\leq C H \|u\|_{H^{2}(\Omega)}\qquad \text{and}\qquad \|u-u_H\|_0\leq C H^{2}\|u\|_{H^{2}(\Omega)},\]
see~\cite{AH16nonlinelliptic}.
The higher regularity ($u\in H^2(\Omega)$) of the exact solution required in those estimates may not be attainable for nonlinearities $A$ with spatial discontinuities.
Even if $u\in H^2(\Omega)$, the corresponding norm depends on spatial derivatives of $A$ which behave like $\varepsilon^{-1}$ for coefficients varying on a scale $\varepsilon$.
In practice this implies that $H$ needs to be at least $\varepsilon$ in order to observe the linear convergence in the $H^1(\Omega)$-norm.
In other words, for small $\varepsilon$, there is a large pre-asymptotic region where the error stagnates (at a high level) in practice.

The goal of the multiscale method presented in Section \ref{sec:lod} is to circumvent both issues (higher regularity of the solution and dependence on the variations of $A$).
At the heart of the method is the choice of a suitable interpolation operator and we now introduce the required properties as well as an appropriate example.
Let $I_H: H^1_0(\Omega)\to V_H$ denote a bounded local linear projection operator, i.e., $I_H\circ I_H = I_H$, with the following stability and approximation properties for all $v\in H^1_0(\Omega)$
\begin{align}
	\label{eq:IHstab}
	|I_H v|_{1, T}&\lesssim |v|_{1, \UN(T)},\\
	\label{eq:IHapprox}
	\|v-I_H v\|_{0, T}&\lesssim H|v|_{1, \UN(T)}.
\end{align}
where the constants are independent of $H$ and $\UN(T):=\{K\in \CT_H: K\cap T\neq \emptyset\}$ denotes the neighborhood of an element.
A possible choice (which we use in our implementation of the method)
is to define $I_H:=E_H\circ\Pi_{H}$. $\Pi_{H}$ is the $L^2$-projection onto the elementwise affine functions $\mathcal{S}^1(\mathcal{T}_H)$, and $E_H$ is the averaging operator that maps discontinuous functions in $\CS^1(\CT_H)$  to $V_H$ by
assigning to each free vertex the arithmetic mean of the corresponding
function values of the neighboring cells, that is, for any $v\in \CS^1(\CT_H)$
and any vertex $z$ of $\CT_H$,
\begin{equation*}
	(E_H(v))(z) =
	\sum_{T\in\CT_H,\;z\in T}v|_T (z) 
	\bigg/
	\operatorname{card}\{K\in\CT_H\,,\,z\in K\}.
\end{equation*}
For further details on suitable interpolation operators we refer to \cite{EHMP16LODimpl}.

\section{Computational multiscale method}
\label{sec:lod}
In the following, we assume that an interpolation operator $I_H:H^1_0(\Omega)\to V_H$ satisfying the projection property as well as \eqref{eq:IHstab} and \eqref{eq:IHapprox} is at hand.
Abbreviating $W:=\ker I_H$, we have the splitting $H^1_0(\Omega)=V_H\oplus W$. The main idea of the Localized Orthogonal Decomposition \cite{MP14LOD,Pet15LODreview} is to make this splitting problem-dependent. In the linear elliptic case the splitting is orthogonalized with respect to the energy scalar product.
Below, we discuss how this idea can be transferred to the nonlinear case. We introduce a linearization procedure in the next subsection which makes the computation of a multiscale space in the spirit of the LOD possible. 
Afterwards, we present the localized computation of the new multiscale basis functions.

\subsection{An ideal method and its linearization}
\label{subsec:ideallin}
Motivated by linear elliptic equations, one could (naively) try to introduce a Galerkin method over a subset $V_H^{\mathrm{nl}, \mathrm{ms}}\subset H^1_0(\Omega)$, i.e., we seek $u_H^{\mathrm{nl}, \mathrm{ms}}$ such that
\[\CB(u_H^{\mathrm{nl}, \mathrm{ms}}; v)=(f, v)_\Omega \qquad \text{for all}\quad v\in V_H^{\mathrm{nl}, \mathrm{ms}},\]
where the set $V_H^{\mathrm{nl}, \mathrm{ms}}$ is defined via
\begin{equation}
	\label{eq:nonlinearortho}
	\CB(v_H^{\mathrm{nl}, \mathrm{ms}}; w)=0\qquad \text{for all}\quad v_H^{\mathrm{nl}, \mathrm{ms}}\in V_H^{\mathrm{nl}, \mathrm{ms}}\quad \text{and all}\quad w\in W.
\end{equation}
This is the orthogonalization idea behind the original method, see~\cite{MP14LOD,Pet15LODreview}.
Due to the quasi-optimality \eqref{eq:quasiopt} and the properties \eqref{eq:IHstab} and \eqref{eq:IHapprox} of $I_H$, one obtains the a priori error estimate
\[|u-u_H^{\mathrm{nl}, \mathrm{ms}}|_1\lesssim H \|f\|_0\]
with optimal rate in the mesh size, independent of the regularity of the continuous solution $u$.
This estimate is derived similar to the linear case  \cite{MP14LOD, Pet15LODreview}.
Because of the nonlinearity of $\CB$ in its first argument, however, $V_H^{\mathrm{nl}, \mathrm{ms}}$ is no longer a linear subspace.
To be more precise, it holds $V_H^{\mathrm{nl}, \mathrm{ms}}=(\operatorname{id}-\CQ^{\mathrm{nl}})V_H$, where $\CQ^{\mathrm{nl}}:V_H\to W$ solves
\begin{equation}
	\label{eq:nonlinearcorrec}
	\CB(v_H-\CQ^{\mathrm{nl}}v_H; w)=0\qquad \text{for all}\quad w\in W.
\end{equation}
Here, we clearly see that $\CQ^{\mathrm{nl}}$ is a nonlinear operator.
Therefore, it is by no means clear whether the proposed multiscale method is at all well defined. Even if this is the case, the method is very complicated as it involves two coupled nonlinear problems, where \eqref{eq:nonlinearcorrec} is additionally posed on the fine scale.

Here, we propose the following simple yet effective linearization approach.
We approximate the nonlinearity $A(x, \nabla v)$ by a function $A_L(x, \nabla u^*, \nabla v)$. Here, $A_L:\Omega\times \rz^d\times \rz^d\to \rz^d$ is affine in its last argument and we call $u^*\in H^1_0(\Omega)$ the linearization point. 
	We make the following assumption on $A_L$.
	\begin{assumption}
		\label{asspt:linearization}
		Write $A_L(x, \nabla u^*,\nabla v)=\VA_L(x,\nabla u^*)\nabla v+b_L(x,\nabla u^*)$  with $\VA_L(x,\nabla u^*)\in\rz^{d\times d}$ and $b_L(x,\nabla u^*)\in\rz^d$.
		We assume that
		\begin{itemize}
			\item $\VA_L(\cdot, \nabla u^*)\in L^\infty(\Omega; \rz^{d\times d})$ and $b_L(\cdot, \nabla u^*)\in L^2(\Omega; \rz^d)$ for all $u^*\in H^1_0(\Omega)$;
			\item $\VA_L(x,\nabla u^*)$ is symmetric for all $u^*\in H^1_0(\Omega)$; 
			\item there exists $0<\lambda_L\leq \Lambda_L$ such that
			\[\lambda_L|\xi|^2\leq \VA_L(x,\nabla u^*)\xi\cdot \xi\leq \Lambda_L|\xi|^2\qquad \text{for all}\quad x\in\Omega, u^*\in H^1_0(\Omega),\xi\in\rz^d.\]
		\end{itemize}
	\end{assumption}
The assumption of symmetry is only made for convenience and to avoid cluttering of notation in the following.
Although this linearization model may seem rather abstract, it  includes Newton-type as well as Ka\v{c}anov-type linearizations as illustrated in the example, cf.~\cite{EEV11nonlinearapost}.

\begin{example}\label{ex:linearization}
	Newton-type linearizations are based on a Taylor expansion up to the first order of the nonlinearity around the linearization point. 
	In particular, we approximate $A(x,\nabla v)\approx A(x,\nabla u^*)+D_\xi A(x, \nabla u^*)\nabla(v-u^*)$, where $D_\xi A$ denotes the Jacobian of $A$ with respect to the second argument.
	In the notation of Assumption \ref{asspt:linearization}, this means that $b_L(x, \nabla u^*)=A(x,\nabla u^*)-D_\xi A(x,\nabla u^*)\nabla u^*$ and $\boldsymbol{A}_L(x,\nabla u^*)=D_\xi A(x,\nabla u^*)$.
	The assumptions of strict monotonicity and Lipschitz continuity on $A$ (cf.\ Assumption \ref{asspt:monotone}) imply that $\boldsymbol{A}_L$ indeed satisfies Assumption \ref{asspt:linearization}, see~\cite[Lemma 6.5.2]{Hub15phdhmm}.
	
	In the case that $A$ takes the form $A(x,\nabla u)=\alpha(x,|\nabla u|^2)\nabla u$,  Ka\v{c}anov-type linearizations are very popular, which ``freeze the nonlinearity''.
	In the language of Assumption \ref{asspt:linearization}, one sets $A_L(x,\nabla u^*, \nabla v):=\alpha(x,|\nabla u^*|^2)\nabla v$, and, hence, $\boldsymbol{A}_L(x, \nabla u^*)=\alpha(x, |\nabla u^*|^2)$ and $b_L(x, \nabla u^*)=0$.
\end{example}

Let us come back to the linearization of \eqref{eq:nonlinearcorrec}. 
We pick and fix  a function $u^*\in H^1_0(\Omega)$ and set  $\mathfrak{A}:=\VA_L(\cdot, \nabla u^*)\in L^\infty(\Omega;\rz^{d\times d})$.
We define the \emph{linear} correction operator $\CQ: V_H\to W$ via
\begin{equation}
	\label{eq:correctionlinearized}
	\CA(v_H-\CQ v_H, w)=0\qquad \text{for all}\quad w\in W,
\end{equation}
where the bilinear form $\CA$ is defined as
\[\CA(v, \psi):=(\mathfrak{A}(x)\nabla v,\nabla \psi)_\Omega.\]
Due to Assumption \ref{asspt:linearization}, the linear corrector problem \eqref{eq:correctionlinearized} has a unique solution. 
Further, note that \eqref{eq:correctionlinearized} can equivalently be written as
\[\int_\Omega A_L(x, \nabla u^*, \nabla \CQ v_H)\cdot \nabla w\, dx=\int_\Omega A_L(x, \nabla u^*, \nabla v_H)\cdot \nabla w\, dx \qquad \text{for all}\quad w\in W.\]

Having linearized $\mathcal{Q}$, we define the linear multiscale space $V_H^{\mathrm{ms}}:=(\operatorname{id}-\CQ)V_H$.
This problem-adapted space is used in a Galerkin method to seek $u_H^{\mathrm{ms}}\in V_H^{\mathrm{ms}}$ as solution of the \emph{nonlinear} problem
\begin{equation}
	\label{eq:linlodgal}
	\CB(u_H^{\mathrm{ms}}; v_H^{\mathrm{ms}})=(f, v_H^{\mathrm{ms}})_\Omega\qquad \text{for all}\quad v_H^\mathrm{ms}\in V_H^{\mathrm{ms}}.
\end{equation}
Let $\{\lambda_z\}_z$ be the nodal basis of $V_H$ (i.e., the standard hat functions). Then $\{\lambda_z-\CQ\lambda_z\}_z$ forms a basis of $V_H^{\operatorname{ms}}$. Note that this requires only to solve linear problems.
After this basis has been (pre-)computed, the nonlinear problem \eqref{eq:linlodgal} can be solved with any suitable iteration method such as Newton's method, which is rather cheap since the dimension of the multiscale space is small (note $\operatorname{dim}(V_H^{\mathrm{ms}})=\operatorname{dim}(V_H)$).
We emphasize that although we linked nonlinear iterative methods and linearization strategies in Example~\ref{ex:linearization}, the iterative method chosen to solve \eqref{eq:linlodgal} does not need to correspond to the linearization strategy chosen for \eqref{eq:correctionlinearized}.

\subsection{Localization of the basis generation}
\label{subsec:local}
As in the linear case, the corrector problems \eqref{eq:correctionlinearized} are global fine-scale problems, which are as expensive to solve as the solution of a (linear) multiscale model problem on a fine-scale mesh.
However since \eqref{eq:correctionlinearized} is a standard elliptic problem (cf. Assumption \ref{asspt:linearization}), we can localize these corrector problems in the well-known way for the linear case.
To this end, we define the neighborhood
\[\UN(T)=\bigcup_{K\in \CT_H, T\cap K\neq \emptyset} K\]
associated with an element $T\in\CT_H$.
Thereby, for any $m\in\nz_0$, the $m$-layer patches are defined inductively via $\UN^{m+1}(T)=\UN(\UN^m(T))$ with $\UN^0(T):=T$.
The shape regularity implies that there is a bound $C_{\operatorname{ol}, m}$ (depending only on $m$) of the number of the elements in the $m$-layer patch, i.e.,
\begin{equation}
	\label{eq:Colm}
	\max_{T\in \CT_H}\operatorname{card}\{K\in \CT_H: K\subset \UN^m(T)\}\leq C_{\operatorname{ol}, m}.
\end{equation}
Throughout this article, we assume that $\CT_H$ is quasi-uniform, which implies that $C_{\operatorname{ol}, m}$ grows at most polynomially with $m$.

We then define the truncated correction operator $\CQ_m: V_H \to W$ as $\CQ_m=\sum_{T\in \CT_H}\CQ_{T, m}$, where for any $v_H\in V_H$ the truncated element corrector $\CQ_{T, m}v_H\in W(\UN^m(T)):=\{w\in W: w=0\text{ in }\Omega\setminus\UN^m(T)\}$ solves
\begin{equation}
	\label{eq:lincorrectorlocal}
	\CA_{\UN^m(T)}(\CQ_{T, m}v_H, w)=\CA_T(v_H, w)\qquad\text{for all}\quad w\in W(\UN^m(T)).
\end{equation}
Here, $\CA_D$ denotes the restriction of the bilinear form $\CA$ to the subdomain $D\subset \Omega$.
We then set up the multiscale space $V_{H,m}:=(\operatorname{id}-\CQ_m) V_H$.
For each element $T\in \CT_H$, we only have to solve $d$ problems of type \eqref{eq:lincorrectorlocal} with $v_H|_T=x_j$, $j=1,\ldots, d$, or precisely, the following cell problems: Find $q_{T,m}^{(j)}\in W(\UN^m(T))$, $j=1,\ldots, d$, such that
\[\int_{\UN^m(T)} \mathfrak{A}\nabla q_{T,m}^{(j)}\cdot \nabla w\, dx = \int_T\mathfrak{A}e_j\cdot \nabla w\, dx \qquad \text{for all}\quad w\in W(\UN^m(T)),\]
where $e_j$ denotes the $j$th canonical unit vector.
Denoting by $\{\lambda_z\}_z$ the standard hat functions, a basis of $V_{H,m}$ is hence given by 
\[\Bigl\{\lambda_z-\sum_{T\in\CT_H,\;z\in T}\sum_{j=1}^d\Bigl(\frac{\partial}{\partial x_j} \lambda_z|_T\Bigr)q_{T, m}^{(j)}\Bigr\}_z.\]

The localized multiscale method consists of replacing $V_H^{\mathrm{ms}}$ by $V_{H, m}$ in \eqref{eq:linlodgal}. More precisely, we seek (in a Galerkin method) $u_{H,m}\in V_{H, m}$ such that
\begin{equation}
	\label{eq:lodgal}
	\CB(u_{H,m};v_{H,m})=(f, v_{H,m})_\Omega\qquad\text{for all}\quad v_{H,m}\in V_{H,m}.
\end{equation}
Again, the nonlinear problem \eqref{eq:lodgal}  is solved with an iterative method, where the multiscale basis can be pre-computed.
This requires the storage of all correctors, which can be very memory consuming since $q_{T,m}^{(j)}$ includes fine-scale features.
Instead the correctors could also be computed on the fly inside each Newton iteration.
Note that $\CQ_m$ and therefore also the solution $u_{H,m}$ depends on $\mathfrak{A}$ and thereby, they implicitly depend on (i) the chosen linearization model and (ii) the chosen  linearization point $u^*$ in Section \ref{subsec:ideallin}.
The choice of $u^*$ and its consequences will be discussed in Section \ref{subsec:errorlinear} below.
\begin{remark}
	\label{rem:PGLOD}
	To avoid communication between the correctors, one can also consider the Petrov-Galerkin method to seek $u_{H, m}^{PG}\in V_H$ such that
	\begin{equation*}
		\CB(u_{H, m}^{PG}; v_{H,m})=(f, v_{H,m})_\Omega\qquad\text{for all}\quad v_{H,m}\in V_{H,m}.
	\end{equation*}
	In the Petrov-Galerkin method, $q_{T,j}$ and $q_{T^\prime, j}$ for $T, T^\prime\in \CT_H$ with $T\neq T^\prime$ are never needed at the same time. Hence, these correctors can immediately be discarded once the contributions of element $T$ to the linear system (in each Newton iteration) are assembled.
	Hence, when memory becomes the limiting factor, the Petrov-Galerkin variant is preferable, see~\cite{EGH15LODpetrovgalerkin,EHMP16LODimpl}.
	Note, however, that $u_{H,m}^{PG}$ only contains information on the coarse scale $H$.
	The following error analysis will be restricted to the Galerkin case for simplicity.
\end{remark}
\begin{remark}
	The present method is still semi-discrete since the corrector problems \eqref{eq:lincorrectorlocal} are infinite-dimensional. The discretization procedure for them is equivalent to the case of linear elliptic equations: We introduce a second (fine) simplicial mesh $\CT_h$ of $\Omega$ which resolves all features of $A$.
	Denoting by $V_h=H^1_0(\Omega)\cap\CS^1(\CT_h)$ the corresponding lowest order Lagrange finite element space, we set $W_h(\UN^m(T)):=\{w_h\in V_h: w_h=0 \text{ in }\Omega\setminus\UN^m(T), I_Hw_h=0\}$ and discretize \eqref{eq:lincorrectorlocal} by solving over the space $W_h(\UN^m(T))$ instead of $W(\UN^m(T))$.
	In the following, we work with the semi-discrete version and emphasize that similar error estimates (with respect to a reference solution $u_h\in V_h$) can be shown in the fully discrete variant, as illustrated for the linear case, see~\cite{HM14LODbdry,HP13oversampl,MP14LOD}. 
\end{remark}

\section{Error analysis}
\label{sec:error}
In this section, let the linear model $\mathfrak{A}$ be fixed, i.e., the linearization model $A_L$ and the linearization point $u^*\in H^1_0(\Omega)$ are fixed.
Occasionally, we will also use $\mathfrak{A}^u:=\VA_L(\cdot, \nabla u)$ with the above fixed linearization model $A_L$ and the exact solution $u$ to \eqref{eq:nonlinearweak}. $\CQ_m^u$ then denotes the corrector associated with $\mathfrak{A}^u$, i.e., $\CQ_m^u$ is defined via \eqref{eq:lincorrectorlocal} with $\mathfrak{A}$ replaced by $\mathfrak{A}^u$.
We have the following result on the error $\mathcal{Q}-\mathcal{Q}_m$.
\begin{proposition}\label{prop:errortrunc}
	Let Assumptions \ref{asspt:monotone} and  \ref{asspt:linearization} be fulfilled.
	Let $\CQ$ be  defined in \eqref{eq:correctionlinearized} and $\CQ_m$ defined via \eqref{eq:lincorrectorlocal}. 
	There exists $0<\beta<1$ such that for any $v_H\in V_H$
	\[|(\CQ-\CQ_m)v_H|_1\lesssim C_{\operatorname{ol}, m}^{1/2}\,\beta^m\,|v_H|_1.\]
\end{proposition}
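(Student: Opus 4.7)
The plan is to adapt the standard exponential decay argument for correctors from the linear LOD analysis (as in \cite{MP14LOD,HP13oversampl}) to the present situation. The crucial observation is that, once the linearization point $u^*$ and model $A_L$ are fixed, the bilinear form $\CA$ is linear, bounded, and coercive on $H^1_0(\Omega)$ with constants determined by $\lambda_L$ and $\Lambda_L$ from Assumption \ref{asspt:linearization}. Therefore the classical LOD machinery applies essentially verbatim; only the summation over elements needs to be tracked carefully.

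First, I would split the global corrector elementwise. Define the global element correctors $\CQ_T:V_H\to W$ by $\CA(\CQ_T v_H, w) = \CA_T(v_H, w)$ for all $w\in W$, so that $\CQ = \sum_{T\in\CT_H}\CQ_T$. For each $T$, I would establish an energy stability bound $|\CQ_T v_H|_1 \lesssim |v_H|_{1,T}$, which follows directly from testing with $w = \CQ_T v_H$, using coercivity of $\CA$ and boundedness of $\mathfrak{A}$.

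The central step is the exponential decay estimate $|\CQ_T v_H|_{1,\Omega\setminus \UN^m(T)} \lesssim \beta^m |v_H|_{1,T}$ for some $\beta\in(0,1)$. I would prove this by a cutoff-and-iterate argument: for a suitable cutoff function $\eta$ that vanishes on $\UN^{m-1}(T)$ and equals one outside $\UN^m(T)$, one tests the corrector equation with $w = \eta\, \CQ_T v_H - I_H(\eta\, \CQ_T v_H)\in W$ (which is admissible thanks to the projection property of $I_H$ and is supported away from $T$). Combining the coercivity of $\CA$, the Lipschitz bound on $\mathfrak{A}$, and the stability/approximation properties \eqref{eq:IHstab}--\eqref{eq:IHapprox} of $I_H$ yields a bound of the form $|\CQ_T v_H|^2_{1,\Omega\setminus \UN^m(T)} \leq C_{\mathrm{contr}} |\CQ_T v_H|^2_{1,\Omega\setminus \UN^{m-1}(T)}$ with $C_{\mathrm{contr}} = \gamma/(1+\gamma)<1$; iterating over concentric rings then produces the geometric factor $\beta^m$. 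I expect the control of the commutator $I_H(\eta w) - \eta I_H w$ via \eqref{eq:IHstab} and \eqref{eq:IHapprox} to be the main technical obstacle, exactly as in the linear theory.

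Next, I would compare $\CQ_T v_H$ and the truncated $\CQ_{T,m}v_H$. Since $\CQ_{T,m}v_H$ is the Galerkin projection of $\CQ_T v_H$ onto $W(\UN^m(T))$ with respect to $\CA$, Céa's lemma gives $|\CQ_T v_H - \CQ_{T,m}v_H|_1 \lesssim \inf_{w\in W(\UN^m(T))}|\CQ_T v_H - w|_1$. Choosing a suitable cutoff-based approximant supported in $\UN^m(T)$ (for instance $\eta \CQ_T v_H - I_H(\eta \CQ_T v_H)$ with $\eta$ equal to one on $\UN^{m-1}(T)$ and vanishing outside $\UN^m(T)$) and invoking the exponential decay from the previous step, one obtains $|(\CQ_T - \CQ_{T,m})v_H|_1 \lesssim \beta^m |v_H|_{1,T}$, possibly after absorbing one $m$ into $\beta$.

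Finally, I would sum up: for $v_H\in V_H$,
\[
|(\CQ - \CQ_m)v_H|_1 \leq \Bigl|\sum_{T\in\CT_H}(\CQ_T - \CQ_{T,m})v_H\Bigr|_1.
\]
Because each $(\CQ_T-\CQ_{T,m})v_H$ is supported in $\UN^m(T)$, the overlap of supports is controlled by $C_{\operatorname{ol},m}$ from \eqref{eq:Colm}. A Cauchy--Schwarz argument then yields
\[
|(\CQ-\CQ_m)v_H|_1^2 \lesssim C_{\operatorname{ol},m}\sum_{T\in\CT_H}|(\CQ_T-\CQ_{T,m})v_H|_1^2 \lesssim C_{\operatorname{ol},m}\,\beta^{2m}\sum_{T}|v_H|_{1,T}^2 \lesssim C_{\operatorname{ol},m}\,\beta^{2m}|v_H|_1^2,
\]
which gives the claim after taking square roots. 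The nonlinearity of the original problem does not enter the argument at all: once the linear surrogate $\mathfrak{A}$ is fixed, the result is a purely linear LOD decay estimate.
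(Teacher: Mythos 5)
Your proposal is correct and follows exactly the route the paper takes: the paper observes that once $\mathfrak{A}=\boldsymbol{A}_L(\cdot,\nabla u^*)$ is fixed, Assumption~\ref{asspt:linearization} renders $\CA$ a standard bounded, coercive bilinear form, so the result reduces to the linear LOD decay estimate and is simply cited from \cite{HM14LODbdry,MP14LOD,Pet15LODreview}. You spell out the classical cutoff-and-iterate decay argument, C\'ea comparison, and $C_{\operatorname{ol},m}$-weighted summation that those references carry out, which is exactly what is being invoked.
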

Proposition \ref{prop:errortrunc} follows from the  linear elliptic case in \cite{HM14LODbdry,MP14LOD,Pet15LODreview}.
The main idea is that the ideal element corrector $\CQ_T$, which is defined via \eqref{eq:lincorrectorlocal} with $\UN^m(T)=\Omega$, decays exponentially fast (measured in $m$) away from $T$.
With a slightly different localization strategy, the procedure can also be interpreted in the spirit of an iterative domain decomposition solver, see~\cite{KPY16LODiterative,KY16LODiterative}.
Since Assumption \ref{asspt:linearization} holds for all $u^*\in H^1_0(\Omega)$, Proposition~\ref{prop:errortrunc} is still valid if we replace $\CQ$ and $\CQ_m$ by $\CQ^u$ and $\CQ_m^u$, respectively.

We first discuss estimates for the Galerkin method \eqref{eq:lodgal} in Section \ref{subsec:errorlod}.
(Additional) error terms arise from the linearization, which we discuss separately in Section \ref{subsec:errorlinear} together with the choice of $u^*$.

\subsection{A priori error estimates}
\label{subsec:errorlod}
Since $V_{H,m}$ is a linear subspace of $H^1_0(\Omega)$, the Galerkin method \eqref{eq:linlodgal} is automatically well defined, i.e., there exists a unique solution $u_{H,m}$ and its satisfies the following error estimate.
\begin{theorem}\label{thm:errorgal}
	Let Assumptions \ref{asspt:monotone} and  \ref{asspt:linearization} be fulfilled.
	Let $u$ be the solution to \eqref{eq:nonlinearweak} and $u_{H,m}$ the solution to \eqref{eq:lodgal}.
	Then it holds that
	\begin{equation}\label{eq:errorgalenergy}
		|u-u_{H,m}|_1\lesssim (H+C_{\operatorname{ol}, m}^{1/2}\,\beta^m)\|f\|_0+\eta_{\operatorname{lin}}((\operatorname{id}-\CQ)I_Hu)
	\end{equation}
	as well as
	\begin{equation}\label{eq:errorgalenergy1}
		|u-u_{H,m}|_1\lesssim (H+C_{\operatorname{ol}, m}^{1/2}\,\beta^m)\|f\|_0+\eta_{\operatorname{lin}}(u)
	\end{equation}
	with, for any $v\in H^1_0(\Omega)$, the linearization error
	\begin{align*}
		\eta_{\operatorname{lin}}(v)&:=\sup_{w\in W, |w|_1=1}\Bigl|\int_\Omega \bigl[ A(x, \nabla v)-A_L(x, \nabla u^*, \nabla v)\bigr]\cdot \nabla w\, dx\Bigr|.
	\end{align*}
	Further, if $A_L(x, \nabla u, \nabla u)=A(x, \nabla u)$, it also holds that
	\begin{equation}\label{eq:errorgalenergy2}
		|u-u_{H,m}|_1\lesssim (H+C_{\operatorname{ol}, m}^{1/2}\,\beta^m)\|f\|_0 + |(\CQ_m-\CQ_m^u)I_H u|_1.
	\end{equation}
\end{theorem}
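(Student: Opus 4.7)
My plan is a Céa-type quasi-optimality argument for the nonlinear Galerkin formulation \eqref{eq:lodgal}, combined with an approximation result for the multiscale space $V_{H,m}$. The starting point is the decomposition
\[
u - u_{H,m} = \phi + \psi, \qquad \phi := u - (\operatorname{id} - \CQ_m) I_H u, \quad \psi := (\operatorname{id} - \CQ_m) I_H u - u_{H,m}.
\]
Since $I_H$ is a projection and $\CQ_m$ maps into $W = \ker I_H$, a short check shows $I_H\phi = 0$, so $\phi \in W$, while $\psi \in V_{H,m}$ by construction. Strong monotonicity (Assumption~\ref{asspt:monotone}) together with the Galerkin equation tested on $\psi\in V_{H,m}$ eliminates the $\psi$-contribution, and Lipschitz continuity of $\CB$ in its first argument then yields the quasi-optimality $|u - u_{H,m}|_1 \lesssim |\phi|_1$. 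Writing $\phi = \tilde w + (\CQ - \CQ_m) I_H u$ with $\tilde w := u - (\operatorname{id} - \CQ) I_H u \in W$ isolates a truncation error that is immediately controlled by Proposition~\ref{prop:errortrunc}, \eqref{eq:IHstab} and \eqref{eq:stabu} as $|(\CQ - \CQ_m) I_H u|_1 \lesssim C_{\operatorname{ol},m}^{1/2}\beta^m\|f\|_0$.

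The heart of the argument is to bound $|\tilde w|_1$. Because $(\operatorname{id} - \CQ) I_H u$ lies in $V_H^{\mathrm{ms}}$ and is thus $\CA$-orthogonal to $W$ by the very definition of the corrector, coercivity of $\CA$ from Assumption~\ref{asspt:linearization} gives
\[
\lambda_L |\tilde w|_1^2 \leq \CA(\tilde w, \tilde w) = \CA(u, \tilde w).
\]
I would now bridge from the linearized bilinear form $\CA$ to the nonlinear equation by writing $\mathfrak{A}\nabla u = A(x,\nabla u) - [A(x,\nabla u) - A_L(x,\nabla u^*, \nabla u)]$ (together with the $b_L$ contribution carried along in the linearization residual), integrating against $\nabla \tilde w$, and invoking the weak formulation $\int_\Omega A(x,\nabla u)\cdot \nabla \tilde w\, dx = (f,\tilde w)$. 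Since $\tilde w\in W$ satisfies $I_H\tilde w = 0$, the approximation property \eqref{eq:IHapprox} gives $(f,\tilde w) \lesssim H\|f\|_0|\tilde w|_1$, while the integral against the linearization residual is precisely $\eta_{\operatorname{lin}}(u)|\tilde w|_1$. Dividing by $|\tilde w|_1$ proves \eqref{eq:errorgalenergy1}. For \eqref{eq:errorgalenergy} I would repeat the computation but pivot around $\bar u := (\operatorname{id} - \CQ) I_H u$, using $\CA(\bar u, \tilde w) = 0$ and Lipschitz continuity of $A$ and $A_L$ in their last argument to replace the residual at $u$ by the residual at $\bar u$ up to contributions controlled by $|\tilde w|_1$ itself.

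For \eqref{eq:errorgalenergy2}, the hypothesis $A_L(x,\nabla u, \nabla u) = A(x,\nabla u)$ expresses that the oracle linearization at $u^* = u$ (yielding $\mathfrak{A}^u$, $\CQ^u$ and $\CQ_m^u$) is consistent, so the analogous linearization residual at $u$ vanishes identically. Repeating the first two steps of the argument with $\mathfrak{A}^u$ in place of $\mathfrak{A}$ therefore gives the approximation bound
\[
|u - (\operatorname{id} - \CQ_m^u) I_H u|_1 \lesssim (H + C_{\operatorname{ol},m}^{1/2}\beta^m)\|f\|_0,
\]
and concluding via the triangle inequality $|\phi|_1 \leq |u - (\operatorname{id} - \CQ_m^u) I_H u|_1 + |(\CQ_m - \CQ_m^u) I_H u|_1$ combined with the quasi-optimality from the first step delivers \eqref{eq:errorgalenergy2}.

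The main obstacle is this bridging step: the bilinear form $\CA$ used to build the correctors is not the weak form of the original nonlinear equation, so $\CA(u,\cdot)\neq(f,\cdot)$ on $W$ in general, and the mismatch must be absorbed exactly into the quantity $\eta_{\operatorname{lin}}$ appearing in the statement. Choosing the right object to measure this mismatch---at $u$, at $\bar u$, or, in the third estimate, via the difference of correctors at two linearization points---is the conceptual core of the proof and motivates the discussion of linearization strategies in the following subsection.
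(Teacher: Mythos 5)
Your treatment of \eqref{eq:errorgalenergy1} and \eqref{eq:errorgalenergy2} is essentially the paper's own argument: quasi-optimality from the Galerkin method, the choice $v_{H,m}=(\operatorname{id}-\CQ_m)I_Hu$ split through the ideal corrector, Proposition~\ref{prop:errortrunc} for the truncation part, and then coercivity of $\CA$ together with $\CA$-orthogonality of $(\operatorname{id}-\CQ)I_Hu$ to $W$ (resp.\ the three-way split with $\CQ^u$ and $\CQ_m^u$ and the consistency hypothesis $A_L(x,\nabla u,\nabla u)=A(x,\nabla u)$). That part is fine.

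The gap is in your plan for \eqref{eq:errorgalenergy}. You propose to start again from $\lambda_L|\tilde w|_1^2\leq\CA(\tilde w,\tilde w)=\CA(u,\tilde w)$ and then \emph{convert} the residual at $u$ to the residual at $\bar u=(\operatorname{id}-\CQ)I_Hu$ using Lipschitz continuity of $A$ and $A_L$ in the last slot, ``up to contributions controlled by $|\tilde w|_1$.'' But those contributions are of size $(\Lambda+\Lambda_L)|\tilde w|_1\cdot|\tilde w|_1$; after dividing by $|\tilde w|_1$ you would be left with
\[
|\tilde w|_1 \lesssim H\|f\|_0+\eta_{\operatorname{lin}}(\bar u)+\frac{\Lambda+\Lambda_L}{\lambda_L}|\tilde w|_1,
\]
which only closes under a smallness assumption $\Lambda+\Lambda_L<\lambda_L$ that the theorem does not impose. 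The paper avoids this entirely by \emph{not} passing through coercivity of $\CA$ for \eqref{eq:errorgalenergy}. Instead it invokes strong monotonicity of $A$ itself:
\[
\lambda|\tilde w|_1^2\lesssim\int_\Omega\bigl[A(x,\nabla u)-A(x,\nabla\bar u)\bigr]\cdot\nabla\tilde w\,dx=(f,\tilde w)-\int_\Omega A(x,\nabla\bar u)\cdot\nabla\tilde w\,dx,
\]
and then uses the corrector relation to subtract $A_L(x,\nabla u^*,\nabla\bar u)$ from the second integrand, which lands directly on $\eta_{\operatorname{lin}}(\bar u)$ with no leftover $|\tilde w|_1^2$ term. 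So the two estimates \eqref{eq:errorgalenergy} and \eqref{eq:errorgalenergy1} really do require different starting inequalities (monotonicity of $A$ versus coercivity of $\CA$); trying to derive one from the other by Lipschitz perturbation is the wrong move here. You should replace the ``pivot around $\bar u$'' step by the strong-monotonicity argument.

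One further remark that applies to both you and the paper: when you invoke the corrector equation to kill $\int_\Omega A_L(x,\nabla u^*,\nabla\bar u)\cdot\nabla\tilde w\,dx$, what \eqref{eq:correctionlinearized} actually gives is $\int_\Omega\VA_L\nabla\bar u\cdot\nabla\tilde w\,dx=0$, so a dangling term $\int_\Omega b_L(x,\nabla u^*)\cdot\nabla\tilde w\,dx$ remains; it is worth stating explicitly how this is handled (it vanishes for Ka\v{c}anov-type $b_L=0$; for Newton-type one must keep it as an additional linearization contribution or note it cancels against the corresponding $b_L$ in the decomposition $\VA_L\nabla\tilde w=A_L(\cdot,\cdot,\nabla u)-A_L(\cdot,\cdot,\nabla\bar u)$). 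Your parenthetical ``together with the $b_L$ contribution carried along'' gestures at this but does not resolve it.
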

Note that the assumption $A_L(x, \nabla u, \nabla u)=A(x, \nabla u)$ is satisfied for the Newton-type and the Ka\v{c}anov-type linearization introduced in Example~\ref{ex:linearization}.

\begin{proof}[Proof of Theorem \ref{thm:errorgal}]
	Since \eqref{eq:lodgal} defines a Galerkin method, the quasi-optimality \eqref{eq:quasiopt} leads to
	\[|u-u_{H,m}|_1\lesssim \inf_{v_{H,m}\in V_{H,m}}|u-v_{H,m}|_1.\]
	
	\emph{Proof of \eqref{eq:errorgalenergy}:}
	We choose $v_{H,m}=(\operatorname{id}-\CQ_m)I_H u = (\operatorname{id}-\CQ)I_H u+(\CQ-\CQ_m)I_H u$ and observe that
	the second term can directly be estimated using Proposition \ref{prop:errortrunc}, the stability of $I_H$ and \eqref{eq:stabu}.
	Note that by definition $u-(\operatorname{id}-\CQ)I_H u\in W$.
	Hence we obtain with the strong monotonicity of $A$ (cf. Assumption \ref{asspt:monotone}), the approximation property \eqref{eq:IHapprox}, and the definition of $\mathcal{Q}$ in \eqref{eq:correctionlinearized} that
	\begin{align*}
		&\!\!\!\!|u-(\operatorname{id}-\CQ)I_H u|^2_1\\*
		&\lesssim \int_{\Omega}[A(x,\nabla u)-A(x, \nabla(\operatorname{id}-\CQ)I_Hu)]\cdot \nabla(u-(\operatorname{id}-\CQ)I_H u)\, dx\\
		&=(f, u-(\operatorname{id}-\CQ)I_H u)_\Omega-\int_{\Omega}A(x,  \nabla(\operatorname{id}-\CQ)I_Hu)\cdot \nabla(u-(\operatorname{id}-\CQ)I_H u)\, dx\\
		&\stackrel{\eqref{eq:correctionlinearized}}{=}(f, u-(\operatorname{id}-\CQ)I_H u)_\Omega\\
		&\qquad-\int_{\Omega}[A(x,  \nabla(\operatorname{id}-\CQ)I_Hu)-A_L(x, \nabla u^*, \nabla \operatorname{id}-\CQ)I_H u)]\cdot \nabla(u-(\operatorname{id}-\CQ)I_H u)\, dx\\
		&\lesssim (H\|f\|_0+\eta_{\mathrm{lin}}((\operatorname{id}-\CQ)I_H u))\,|u-(\operatorname{id}-\CQ)I_H u|_1.
	\end{align*}
	Combination of this estimate with Proposition \ref{prop:errortrunc} leads to \eqref{eq:errorgalenergy}.
	
	\emph{Proof of \eqref{eq:errorgalenergy1}:}
	We again choose $v_{H,m}=(\operatorname{id}-\CQ_m)I_H u = (\operatorname{id}-\CQ)I_H u+(\CQ-\CQ_m)I_H u$, but we treat the first term in a slightly different manner.
	Namely, employing Assumption \ref{asspt:linearization} and the definition of $\CQ$ in \eqref{eq:correctionlinearized}, we deduce
	\begin{align*}
		|u-(\operatorname{id}-\CQ)I_H u|_1^2&\lesssim \int_\Omega \VA_L(x, \nabla u^*)\nabla (u-(\operatorname{id}-\CQ)I_H u)\cdot \nabla (u-(\operatorname{id}-\CQ) I_H u)\, dx\\
		&=\int_\Omega \bigl[A_L(x, \nabla u^*, \nabla u)-A_L(x, \nabla u^*, \nabla (\operatorname{id}-\CQ)I_H u)\bigr]\cdot  \nabla (u-(\operatorname{id}-\CQ)I_H u)\, dx\\
		&=\int_\Omega \bigl[A_L(x, \nabla u^*, \nabla u)-A(x,\nabla u)+A(x,\nabla u)\bigr]\cdot \nabla(u-(\operatorname{id}-\CQ)I_Hu)\, dx\\
		&=(f, u-(\operatorname{id}-\CQ)I_H u)_\Omega\\
		&\qquad +\int_\Omega \bigl[A_L(x,\nabla u^*, \nabla u)-A(x, \nabla u)\bigr]\cdot \nabla(u-(\operatorname{id}-\CQ)I_Hu)\, dx\\
		&\lesssim (H\|f\|_0+\eta_{\operatorname{lin}}(u)) |u-(\operatorname{id}-\CQ)I_H u|_1.
	\end{align*}
	Combination of this estimate with Proposition \ref{prop:errortrunc} leads to \eqref{eq:errorgalenergy1}.
	
	\emph{Proof of \eqref{eq:errorgalenergy2}:}
	Again, we choose $v_{H,m}:=(\operatorname{id}-\CQ_m)I_H u$, but we split it in a different way this time.
	We write 
	\[(\operatorname{id}-\CQ_m)I_H u=(\operatorname{id}-\CQ^u)I_H u+(\CQ^u-\CQ_m ^u)I_H u+(\CQ_m^u-\CQ_m)I_Hu,\]
	where we recall that $\CQ^u$ and $\CQ_m^u$ are the solutions to \eqref{eq:correctionlinearized} and \eqref{eq:lincorrectorlocal}, respectively, with coefficient $\mathfrak{A}^u:=\VA_L(x, \nabla u)$. 
	The last term $(\CQ_m^u-\CQ_m)I_Hu$ is directly included in \eqref{eq:errorgalenergy2}, while the second term $(\CQ^u-\CQ_m ^u)I_H u$ is again estimated with Proposition \ref{prop:errortrunc}.
	For the first term $(\operatorname{id}-\CQ^u)I_H u$, we observe due to Assumption \ref{asspt:linearization} that
	\begin{align*}
		|u-(\operatorname{id}-\CQ^u)I_H u|_1^2&\lesssim \int_\Omega \VA_L(x, \nabla u)\nabla (u-(\operatorname{id}-\CQ^u)I_H u)\cdot \nabla (u-(\operatorname{id}-\CQ^u) I_H u)\, dx\\
		&=\int_\Omega \bigl[A_L(x, \nabla u, \nabla u)-A_L(x, \nabla u, \nabla (\operatorname{id}\!-\!\CQ^u)I_H u)\bigr]\!\cdot\!  \nabla (u-(\operatorname{id}\!-\!\CQ^u)I_H u)\, dx.
	\end{align*}
	Since $u-(\operatorname{id}-\CQ^u)I_H u\in W$, the definition of $\CQ^u$ in \eqref{eq:correctionlinearized} implies that 
	\[\int_\Omega A_L(x, \nabla u, \nabla (\operatorname{id}-\CQ^u)I_H u)\cdot  \nabla (u-(\operatorname{id}-\CQ^u)I_H u)\, dx=0.\]
	Hence, employing the assumption $A_L(x, \nabla u, \nabla u)=A(x, \nabla u)$ and \eqref{eq:IHapprox}, we deduce
	\begin{equation*}
		\begin{split}
			|u-(\operatorname{id}-\CQ^u)I_H u|_1^2&\lesssim \int_\Omega A_L(x, \nabla u, \nabla u)\cdot \nabla(u-(\operatorname{id}-\CQ^u)I_Hu)\, dx\\
			&=\int_\Omega A(x, \nabla u)\cdot \nabla(u-(\operatorname{id}-\CQ^u)I_H u)\, dx\\
			&=(f, u-(\operatorname{id}-\CQ^u)I_H u)_\Omega\\
			&\lesssim H\|f\|_0\,  |u-(\operatorname{id}-\CQ^u)I_H u|_1.\hfill\qedhere
		\end{split}
	\end{equation*}
\end{proof}

Up to the linearization errors, all variants of the previous theorem are identical to the linear elliptic case \cite{MP14LOD}.
In particular, if we choose $m\approx |\log(H)|$, we have linear convergence without any assumptions on the regularity of $u$ or the variations of $A$.
By Friedrich's inequality, the same estimate also holds for the $L^2$-norm.
In contrast to the linear case, the Aubin-Nitsche trick cannot be applied so that higher order convergence for nonlinear problems is rather difficult to achieve, see  the discussion in \cite{AH16nonlinelliptic}.
Using the idea of the elliptic projection in \cite{AH16nonlinelliptic}, we obtain an $L^2$-estimate in Theorem \ref{thm:errorgalL2} in the Appendix.
Roughly speaking, it yields quadratic convergence (up to (new) linearization errors) for the choice $m\approx |\log (H)|$.

By the stability of $I_H$ we deduce an estimate for the error to $I_H u_{H,m}$, which describes the finite element part of the Galerkin solution.
\begin{corollary}\label{cor:errorgalFEM}
	Let Assumptions \ref{asspt:monotone} and \ref{asspt:linearization} be fulfilled.
	Let $u$ be the solution to \eqref{eq:nonlinearweak} and $u_{H,m}$ the solution to  \eqref{eq:lodgal}.
	Then it holds that
	\begin{align}\label{eq:errorgalFEM}
		\|u-I_Hu_{H,m}\|_0\lesssim H \inf_{v_H\in V_H}|u-v_H|_1+\|u-u_{H,m}\|_0+H|u-u_{H,m}|_1.
	\end{align}
\end{corollary}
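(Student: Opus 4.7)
The plan is to bound $\|u - I_H u_{H,m}\|_0$ by splitting, via the triangle inequality,
\[\|u - I_H u_{H,m}\|_0 \leq \|u - I_H u\|_0 + \|I_H(u - u_{H,m})\|_0,\]
and then to estimate each piece using only the projection property $I_H \circ I_H = I_H$ together with the approximation property \eqref{eq:IHapprox} of the interpolation operator. No structural property of the multiscale solution $u_{H,m}$ itself is needed, so the argument is purely a perturbation/approximation statement.

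For the first summand I would pick an arbitrary $v_H \in V_H$ and exploit the projection property to write
\[u - I_H u = (u - v_H) - I_H(u - v_H),\]
since $I_H v_H = v_H$. Applying \eqref{eq:IHapprox} element-wise to the function $u - v_H$ and summing over $T \in \CT_H$ (using that the neighborhoods $\UN(T)$ have bounded overlap by shape regularity) gives
\[\|u - I_H u\|_0 \lesssim \|u - v_H\|_0 + \|I_H(u-v_H)\|_0 \lesssim H|u - v_H|_1.\]
Taking the infimum over $v_H \in V_H$ produces the first term on the right-hand side of \eqref{eq:errorgalFEM}.

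For the second summand I would apply the same trick once more, this time to the function $u - u_{H,m}$ itself: by the triangle inequality
\[\|I_H(u-u_{H,m})\|_0 \leq \|u-u_{H,m}\|_0 + \|(u-u_{H,m}) - I_H(u-u_{H,m})\|_0,\]
and the second summand is bounded via \eqref{eq:IHapprox}, summed over elements with finite overlap, by $H\,|u-u_{H,m}|_1$. Combining the two bounds yields \eqref{eq:errorgalFEM}. I do not expect any genuine obstacle; the only subtlety is to remember that \eqref{eq:IHapprox} is a local estimate and requires a finite-overlap argument to pass to a global $L^2$-bound, and to use the projection property in both steps so that the approximation estimate can be applied to differences rather than to $u$ directly (which would introduce the regularity of $u$ that we are explicitly trying to avoid).
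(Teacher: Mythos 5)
Your approach is the same as the paper's: split via the triangle inequality into $\|u-I_Hu\|_0 + \|I_H(u-u_{H,m})\|_0$, then use the projection, approximation, and stability properties of $I_H$; the paper's proof is a one-liner and your proposal correctly fills in the details.

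One small misstep in the displayed math for the first term: the chain
\[\|u - I_H u\|_0 \lesssim \|u - v_H\|_0 + \|I_H(u-v_H)\|_0 \lesssim H|u - v_H|_1\]
is not valid as written, because the last inequality would require $\|u-v_H\|_0 \lesssim H|u-v_H|_1$, which does not hold for an arbitrary $v_H\in V_H$. Your prose already contains the right argument: using $I_H v_H = v_H$ write $u - I_H u = (u-v_H) - I_H(u-v_H)$, apply \eqref{eq:IHapprox} \emph{directly} to this difference, and sum over elements using the bounded overlap of the patches $\UN(T)$ to obtain $\|u-I_Hu\|_0 \lesssim H|u-v_H|_1$; there is no need (and it is incorrect) to pass through the triangle inequality to $\|u-v_H\|_0 + \|I_H(u-v_H)\|_0$. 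With that single display corrected, the proposal is fully correct.
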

\begin{proof}
	With the triangle inequality we split
	\[\|u-I_Hu_{H,m}\|_0\leq \|u-I_H u\|_0+\|I_H(u-u_{H,m})\|_0,\]
	which finishes the proof together with the properties (stability, approximation, and projection) of $I_H$. 
\end{proof}

Note that the two last terms in \eqref{eq:errorgalFEM} can be estimated via Theorem \ref{thm:errorgal}. 
For the $L^2$-norm we also have the estimates from Theorem \ref{thm:errorgalL2} in the appendix.
For $m\approx|\log (H)|$, the error $\|u-I_Hu_{H,m}\|_0$ is at least of order $H$ and might be up to order $H^2$ if $u$ is sufficiently regular.
If we assume $L^2$-stability of $I_H$, i.e., $\|I_H v\|_0\lesssim \|v\|_0$ for all $v\in H^1_0(\Omega)$, the estimate in Corollary \ref{cor:errorgalFEM} simplifies to
\[\|u-I_H u_{H,m}\|_0\lesssim \inf_{v_H \in V_H}\|u-v_H\|_0+\|u-u_{H,m}\|_0.\]
Together with Theorem \ref{thm:errorgalL2} this implies that the error in the finite element part of $u_{H,m}$ is dominated by the $L^2$-best-approximation error in the finite element space.

\subsection{Linearization errors and choice of linearization points}
\label{subsec:errorlinear}
In this section, we discuss estimates for the linearization errors of Theorem \ref{thm:errorgal}, namely $\eta_{\mathrm{lin}}(v)$ and $|(\CQ_m-\CQ_m^u)I_H u|_1$, and their implication on the choice of the linearization point $u^*$.
\paragraph{Linearization error $\eta_{\mathrm{lin}}(v)$.}
First, we note that due to Assumptions \ref{asspt:monotone} and \ref{asspt:linearization}, $\eta_{\mathrm{lin}}(v)$ can always be bounded as follows
\begin{align*}
	\eta_{\mathrm{lin}}(v)&\leq \|A(x,\nabla v)-A_L(x, \nabla u^*, \nabla v)\|_0\\
	&\leq \|A(x, \nabla v)-A(x,0)\|_0+\|\VA_L(x, \nabla u^*)\nabla v\|_0+\|A(x,0)\|_0+\|b_L(x, \nabla u^*)\|_0\\
	&\lesssim (\Lambda+\Lambda_L)| v|_1+C_0+\|b_L(x, \nabla u^*)\|_0.
\end{align*}
Hence, the stabilities of $\CQ$ and $I_H$ as well as \eqref{eq:stabu} directly imply that
\[\eta_{\mathrm{lin}}(u)\leq C\|f\|_0+C(u^*)\quad \text{and} \quad\eta_{\mathrm{lin}}((\operatorname{id}-\CQ)I_Hu)\leq C\|f\|_0+C(u^*)\]
with $C(u^*)=C_0+\|b_L(x,\nabla u^*)\|_0$. Considering Example~\ref{ex:linearization}, we have $C(u^*)=C_0$ for Ka\v{c}anov-type linearizations and $C(u^*)\leq 2C_0+(\Lambda+\Lambda_L)|u^*|_1$ for Newton-type lineariaztions so that in both cases the linearization error is bounded by the data $f$ and $u^*$.
As a consequence, if the nonlinearity and $\|f\|_0$ are small, the linearization error is negligible. 
This of course is a rather restrictive assumption since it basically means that we are still in the almost linear case with only a small nonlinear perturbation.
Note that the above bound on $\eta_{\mathrm{lin}}(v)$ can be simplified in the case $v=(\operatorname{id}-\CQ)I_H u$ because due to \eqref{eq:correctionlinearized}
\[\int_\Omega A_L(x,\nabla u^*, (\operatorname{id}-\CQ)I_H u)\cdot \nabla w\, dx =0 \qquad \text{for all}\quad w\in W.\]

Next, we will show that $\eta_{\mathrm{lin}}(v)$ is small for the linearizations of Example \ref{ex:linearization} if $u^*$ is close to the exact solution $u$.

\begin{lemma}[Linearization error for Ka\v{c}anov-type linearization]
	\label{lem:linKacanov}
	Let $A(x, \nabla v)=\alpha(x, |\nabla v|^2)\nabla v$ and set $\VA_L(x, \nabla u^*)=\alpha(x,|\nabla u^*|^2)$ and $b_L(x, \nabla u^*)=0$. Then,
	\[\eta_{\mathrm{lin}}(v)\leq (\Lambda+\Lambda_L) |v-u^*|_1.\]
\end{lemma}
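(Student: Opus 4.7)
The plan is to unfold the definition of $\eta_{\mathrm{lin}}(v)$ and reduce the problem to a pointwise bound on the integrand $A(x, \nabla v) - A_L(x, \nabla u^*, \nabla v)$.

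First I would write out, in the Ka\v{c}anov setting, that
\[A(x, \nabla v) - A_L(x, \nabla u^*, \nabla v) = \alpha(x, |\nabla v|^2)\, \nabla v - \alpha(x, |\nabla u^*|^2)\, \nabla v.\]
The key trick is to add and subtract the term $\alpha(x, |\nabla u^*|^2)\, \nabla u^* = A(x, \nabla u^*)$, which rewrites this as
\[\bigl[A(x, \nabla v) - A(x, \nabla u^*)\bigr] \;-\; \alpha(x, |\nabla u^*|^2)\, (\nabla v - \nabla u^*).\]
This decomposition is the whole content of the proof: it converts a comparison between $A$ and $A_L$ at possibly far-apart first arguments into two comparisons whose inputs differ only by $\nabla v - \nabla u^*$.

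The first bracket can be controlled by the Lipschitz continuity in Assumption \ref{asspt:monotone}(ii), yielding the pointwise bound $\Lambda |\nabla v - \nabla u^*|$. The second bracket is just $\VA_L(x, \nabla u^*)(\nabla v - \nabla u^*)$, so Assumption \ref{asspt:linearization} (the $\Lambda_L$ upper bound on $\VA_L$) gives $\Lambda_L |\nabla v - \nabla u^*|$ pointwise. Adding them produces $|A(x,\nabla v) - A_L(x, \nabla u^*, \nabla v)| \leq (\Lambda + \Lambda_L) |\nabla v - \nabla u^*|$.

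Finally, I would apply Cauchy--Schwarz to the integral in the definition of $\eta_{\mathrm{lin}}(v)$, bound the $L^2$-norm of the integrand by the above pointwise estimate, and take the supremum over $w \in W$ with $|w|_1 = 1$ to get $\eta_{\mathrm{lin}}(v) \le (\Lambda + \Lambda_L) |v - u^*|_1$. There is no real obstacle here: the argument is a one-line algebraic identity followed by two triangle-type estimates; the only thing to pay attention to is identifying the right addition/subtraction that aligns the Lipschitz bound on $A$ with the boundedness bound on $\VA_L$.
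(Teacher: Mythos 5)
Your argument is correct and matches the paper's proof: both begin by bounding $\eta_{\mathrm{lin}}(v)$ via Cauchy--Schwarz by the $L^2$-norm of $A(x,\nabla v)-A_L(x,\nabla u^*,\nabla v)$, then split $\alpha(x,|\nabla v|^2)\nabla v - \alpha(x,|\nabla u^*|^2)\nabla v$ by adding and subtracting $A(x,\nabla u^*)=\alpha(x,|\nabla u^*|^2)\nabla u^*$, and finally bound the two pieces by $\Lambda|v-u^*|_1$ (Lipschitz continuity of $A$) and $\Lambda_L|v-u^*|_1$ (spectral bound on $\VA_L$). The only cosmetic difference is that you phrase the intermediate triangle-inequality step pointwise rather than directly in $L^2$, which is immaterial.
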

\begin{proof}
	Assumptions \ref{asspt:monotone} and \ref{asspt:linearization} directly yield
	\begin{align*}
		\eta_{\operatorname{lin}}(v)&\leq \|\alpha(x,|\nabla v|^2)\nabla v - \alpha(x,|\nabla u^*|^2)\nabla v\|_0\\
		&\leq \|A(x,\nabla v)-A(x,\nabla u^*)\|_0+\|\alpha(x,|\nabla u^*|^2)\nabla (u^*-v)\|_0\\
		&\leq (\Lambda+\Lambda_L) \|\nabla (v-u^*)\|_0.\qedhere
	\end{align*}
\end{proof}

\begin{lemma}[Linearization error for Newton-type linearization]
	\label{lem:linNewton}
	Set $\VA_L(x,\nabla u^*)=D_\xi A(x, \nabla u^*)$ and \linebreak[4]$b_L(x, \nabla u^*)=A(x, \nabla u^*)-D_\xi A(x, \nabla u^*)\nabla u^*$.
	Assume that $D_\xi A$ is Lipschitz continuous in its last argument, i.e., there is $L_A>0$ such that
	\begin{equation}\label{eq:assptLipschitz}
		|D_\xi A(x, \xi_1)-D_\xi A(x, \xi_2)|\leq L_A |\xi_1-\xi_2|\qquad\text{for all}\quad \xi_1, \xi_2\in \rz^d\quad \text{and almost all}\quad x\in \Omega.
	\end{equation}
	Then,
	\[\eta_{\mathrm{lin}}(v)\leq L_A \|\nabla (v-u^*)\|_{L^\infty(\Omega)}\, |v-u^*|_1.\]
\end{lemma}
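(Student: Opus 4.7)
The plan is to identify $\eta_{\operatorname{lin}}(v)$ with a Taylor remainder and then bound that remainder pointwise via the Lipschitz assumption \eqref{eq:assptLipschitz}. With the prescribed $\VA_L$ and $b_L$, the linearization can be rewritten as
\[
A_L(x,\nabla u^*,\nabla v) = A(x,\nabla u^*) + D_\xi A(x,\nabla u^*)\,(\nabla v - \nabla u^*),
\]
so that the integrand inside the supremum defining $\eta_{\operatorname{lin}}(v)$ becomes
\[
A(x,\nabla v) - A(x,\nabla u^*) - D_\xi A(x,\nabla u^*)(\nabla v - \nabla u^*).
\]

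Since $A(x,\cdot)\in C^1(\rz^d;\rz^d)$ by Assumption \ref{asspt:monotone}, I would apply the fundamental theorem of calculus along the segment joining $\nabla u^*$ and $\nabla v$:
\[
A(x,\nabla v) - A(x,\nabla u^*) = \int_0^1 D_\xi A\bigl(x,\nabla u^* + t(\nabla v - \nabla u^*)\bigr)\,(\nabla v - \nabla u^*)\,dt.
\]
Subtracting $D_\xi A(x,\nabla u^*)(\nabla v - \nabla u^*)$ gives the pointwise bound
\[
\bigl| A(x,\nabla v) - A_L(x,\nabla u^*,\nabla v)\bigr| \leq \int_0^1 L_A\, t\,|\nabla v - \nabla u^*|^2 \,dt = \tfrac{L_A}{2}\,|\nabla(v-u^*)|^2,
\]
where \eqref{eq:assptLipschitz} has been applied to the integrand.

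The final step is to insert this into the definition of $\eta_{\operatorname{lin}}(v)$ and apply H\"older's inequality, pulling out $\|\nabla(v-u^*)\|_{L^\infty(\Omega)}$ and using Cauchy--Schwarz on the remaining factor with the test function $\nabla w$:
\[
\Bigl|\int_\Omega\bigl[A(x,\nabla v)-A_L(x,\nabla u^*,\nabla v)\bigr]\cdot\nabla w\,dx\Bigr| \leq \tfrac{L_A}{2}\,\|\nabla(v-u^*)\|_{L^\infty(\Omega)}\,|v-u^*|_1\,|w|_1.
\]
Taking the supremum over $w\in W$ with $|w|_1=1$ yields the claimed bound (the factor $1/2$ is absorbed in the stated $L_A$, or can be tracked explicitly). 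I do not anticipate a real obstacle here; the only subtlety is to make sure $C^1$-regularity of $A(x,\cdot)$ legitimately allows the integral representation of the remainder, and that the $L^\infty$-factorization of $|\nabla(v-u^*)|^2 = |\nabla(v-u^*)|\cdot|\nabla(v-u^*)|$ is the correct split to match the stated norm combination.
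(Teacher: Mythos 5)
Your proposal is correct and takes essentially the same approach as the paper: you write the linearization error as the integral form of the first-order Taylor remainder of $A$ around $\nabla u^*$ and then invoke the Lipschitz continuity \eqref{eq:assptLipschitz} of $D_\xi A$, followed by an $L^\infty$--$L^2$--$L^2$ H\"older split. Your explicit integration of the factor $t$ even yields the slightly sharper constant $L_A/2$, which the paper does not track.
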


\begin{proof}
	We perform a Taylor expansion of $A$ around $u^*$ and obtain that for any $w\in H^1_0(\Omega)$ it holds that
	\begin{align*}
		&\!\!\!\!\int_\Omega \bigl[ A(x,\nabla v)-A(x, \nabla u^*)+D_\xi A(x, \nabla u^*)\nabla(v-u^*)\bigr]\cdot \nabla w\, dx\\
		&=\int_{\Omega}\int_0^1\bigl[ D_\xi A(x, \nabla u^*+\tau \nabla(v-u^*))-D_\xi A(x, \nabla u^*)\bigr]\nabla (v-u^*)\cdot \nabla w\, d\tau \, dx.
	\end{align*}
	The assumed Lipschitz continuity of $D_\xi A$ \eqref{eq:assptLipschitz} finishes the proof.
\end{proof}

Note that we did not exploit $w\in W$ in the estimates for $\eta_{\mathrm{lin}}(v)$.
Lemmas \ref{lem:linKacanov} and \ref{lem:linNewton} underline that $u^*$ should be close to $u$, which can be seen as a local convergence result and motivates the discussion of different linearization points further below in this section.

\paragraph{Linearization error $|(\CQ_m-\CQ_m^u)I_H u|_1$.}
In case of a Ka\v{c}anoc-type linearization from Example \ref{ex:linearization}, we observe that $\eta_{\mathrm{lin}}(u)$ in \eqref{eq:errorgalenergy1} can be bounded as $\eta_{\mathrm{lin}}(u)\leq \|\mathfrak{A}^u-\mathfrak{A}\|_{L^\infty (\Omega)}|u|_1$.
Hence, this linearization error can be expressed as the $L^\infty(\Omega)$-error between the ``true'' coefficient $\mathfrak{A}^u$ and the ``reference'' coefficient $\mathfrak{A}$ used in practice for computing the corrector $\CQ_m$.
The error $|(\CQ_m-\CQ_m^u)I_H u|_1$ in \eqref{eq:errorgalenergy2} is obviously also closely related to the error between the coefficients $\mathfrak{A}^u$ and $\mathfrak{A}$.
However, estimating directly the error between the correction operators allows for a refined estimate.
Before we present this in detail, let us first note that, obviously,
\[|(\CQ_m-\CQ_m^u)I_H u|_1\leq \sup_{v_H\in V_H, |v_H|_1=1}|(\CQ_m-\CQ_m^u)v_H|_1\, |I_H u|_1\]
so that we will estimate $(\CQ_m-\CQ_m^u)v_H$ for an arbitrary $v_H\in V_H$ in the following.
Further, by the definition of $\CQ_m$ and $\CQ_m^u$ as sums of element correctors with support only in $\UN^m(T)$ we have
\[|(\CQ_m-\CQ_m^u)v_H|_1^2\lesssim C_{\mathrm{ol},m}\sum_{T\in \CT_H}|(\CQ_{T, m}-\CQ_{T,m}^u)v_H|_1^2.\]
We now have the following estimate for the error between the correction operators, the proof is postponed to the Appendix \ref{app:proofcorrecerror}.

\begin{proposition}\label{prop:correcerror}
	Fix $T\in \CT_H$ and $v_H\in V_H$ and recall the notation $\UN^m(T)$ from Section~\ref{subsec:local}.
	For any $a\in L^\infty(\UN^m(T), \rz^{d\times d})$ denote by $\overline{a}$ the average of the trace, i.e.,
	$\overline{a}:=\fint_{\UN^m(T)} \mathrm{tr}(a)\, dx$, and denote by $\widehat{a}\in L^\infty(\UN^m(T),\rz^{d\times d})$ the scaled coefficient, i.e., $\widehat{a}:=a \big/\overline{a}$.
	Recall $\mathfrak{A}=\boldsymbol{A}_L(\cdot, \nabla u^*)$ with associated corrector $\CQ_{T,m}$ and $\mathfrak{A}^u=\boldsymbol{A}_L(\cdot, \nabla u)$ with associated corrector $\CQ_{T,m}^u$, cf.~Assumption~\ref{asspt:linearization}.
	Define
	\[E_{\CQ, T}^2:=\sum_{T^\prime\in \CT_H, T^\prime\subset \UN^m(T)}\|\widehat{\mathfrak{A}^u}-\widehat{\mathfrak{A}}\|_{L^\infty(T^\prime)}^2\max_{\psi|_T, \psi\in V_H}\frac{\|(\chi_T\nabla \psi-\nabla \CQ_{T,m}\psi)\|_{0,T^\prime}^2}{| \psi|_{1,T}^2}.\]
	Then,
	\[|(\CQ_{T,m}-\CQ_{T,m}^u)v_H|_1\lesssim E_{\CQ, T} |v_H|_{1,T}.\]
\end{proposition}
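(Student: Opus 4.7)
The plan is to run a standard energy argument for the difference $e := (\CQ_{T,m} - \CQ_{T,m}^u)v_H\in W(\UN^m(T))$, but after first exploiting a crucial scaling invariance. Observe that the local corrector problem \eqref{eq:lincorrectorlocal} is invariant under multiplication of the coefficient by any positive constant: both sides scale by the same factor. Hence, $\CQ_{T,m}$ as defined from $\mathfrak{A}$ coincides with the corrector defined from $\widehat{\mathfrak{A}} = \mathfrak{A}/\overline{\mathfrak{A}}$, and similarly $\CQ_{T,m}^u$ coincides with the corrector for $\widehat{\mathfrak{A}^u}$. Moreover, since $\lambda_L \leq \overline{a}/d \leq \Lambda_L$ for any $a$ satisfying Assumption \ref{asspt:linearization}, the rescaled coefficients $\widehat{\mathfrak{A}}, \widehat{\mathfrak{A}^u}$ are still uniformly elliptic and bounded with constants depending only on $\lambda_L, \Lambda_L, d$.

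With this in hand, I would derive the error equation for $e$ as follows. Using the definitions of $\CQ_{T,m}$ and $\CQ_{T,m}^u$ (with the rescaled coefficients) and writing the element-integrals over $T$ as $\UN^m(T)$-integrals weighted with $\chi_T$, one obtains for every $w\in W(\UN^m(T))$
\[
\int_{\UN^m(T)} \widehat{\mathfrak{A}^u}\,\nabla e\cdot \nabla w\, dx = \int_{\UN^m(T)}(\widehat{\mathfrak{A}^u}-\widehat{\mathfrak{A}})\,(\nabla \CQ_{T,m}v_H-\chi_T\nabla v_H)\cdot \nabla w\, dx,
\]
which is the key identity: the difference of coefficients appears in rescaled form on the right, already contracted against the \emph{known} quantity $\nabla \CQ_{T,m}v_H - \chi_T\nabla v_H$ built only from the ``known'' corrector.

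Testing with $w = e$, the ellipticity of $\widehat{\mathfrak{A}^u}$ bounds the left-hand side from below by $c_L |e|_1^2$. For the right-hand side, split the integral element-wise, estimate each $T^\prime$-contribution by pulling out $\|\widehat{\mathfrak{A}^u}-\widehat{\mathfrak{A}}\|_{L^\infty(T^\prime)}$, and apply Cauchy-Schwarz in the element sum to arrive at
\[
c_L |e|_1^2 \lesssim \Bigl(\sum_{T^\prime\subset \UN^m(T)} \|\widehat{\mathfrak{A}^u}-\widehat{\mathfrak{A}}\|_{L^\infty(T^\prime)}^2 \|\chi_T\nabla v_H-\nabla \CQ_{T,m}v_H\|_{0,T^\prime}^2\Bigr)^{\!1/2} |e|_1.
\]
Finally, since $\CQ_{T,m}$ is linear in $v_H$ and only depends on $v_H|_T$, one may estimate each factor $\|\chi_T\nabla v_H-\nabla \CQ_{T,m}v_H\|_{0,T^\prime}^2$ by $|v_H|_{1,T}^2$ times the supremum that defines $E_{\CQ, T}^2$. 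Dividing by $|e|_1$ yields the claim.

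The one step that requires some care, and which I view as the main obstacle, is justifying and exploiting the rescaling: it is essential that $\overline{\mathfrak{A}}$ is a \emph{scalar} (not a matrix- or function-valued quantity) so that the corrector is genuinely unchanged, and it is equally essential that the ellipticity constants of $\widehat{\mathfrak{A}^u}$ remain bounded away from zero uniformly. Once this is in place, the remainder is a direct energy estimate and the rescaling yields the sharper bound $\|\widehat{\mathfrak{A}^u}-\widehat{\mathfrak{A}}\|_{L^\infty(T^\prime)}$ rather than the coarser $\|\mathfrak{A}^u-\mathfrak{A}\|_{L^\infty(T^\prime)}$, which is the whole point of working with the hatted coefficients.
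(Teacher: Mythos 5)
Your proposal is correct and follows essentially the same energy argument as the paper's proof: exploit that the element corrector is invariant under multiplication of the coefficient by a scalar constant, derive the error equation $\bigl(\widehat{\mathfrak{A}^u}\nabla e,\nabla w\bigr)_{\UN^m(T)}=\bigl((\widehat{\mathfrak{A}^u}-\widehat{\mathfrak{A}})(\nabla\CQ_{T,m}v_H-\chi_T\nabla v_H),\nabla w\bigr)_{\UN^m(T)}$ for $e=(\CQ_{T,m}-\CQ_{T,m}^u)v_H$, test with $w=e$, and localize element-wise. The only cosmetic difference is the order of operations at the end (you apply element-wise Cauchy--Schwarz and then pass to the $\psi$-maximum, while the paper takes the $\psi$-maximum first and then splits over elements), and you make explicit the spectral bounds $\lambda_L\leq\overline{a}/d\leq\Lambda_L$ that the paper only alludes to in its closing remark.
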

We emphasize that Proposition \ref{prop:correcerror} relates the error between the element correctors to the error between the coefficients $\mathfrak{A}$ and $\mathfrak{A}^u$, but (i) locally on each element or element patch, respectively, and (ii) only the $L^\infty$-error between the \emph{scaled} coefficients is relevant.
The latter point is possible because \eqref{eq:lincorrectorlocal} can be multiplied by the \emph{scalar-valued} constant $\overline{\mathfrak{A}}$ without changing the element corrector.
Hence, if a coefficient is (locally) multiplied by a constant, the error between the associated correction operators is zero while the error between the (unscaled) coefficients themselves can be very large. 

Error indicators similar to $E_{\CQ, T}$ in Proposition \ref{prop:correcerror} were already presented in \cite{HM19lodsimilar, HKM20lodperturbed} in the context of time-changing or perturbed diffusion coefficients with the following differences.
First, the idea of studying the scaled coefficients is new in Proposition \ref{prop:correcerror}. Second, since $\mathfrak{A}^u$ is not available in practice in our case, Proposition \ref{prop:correcerror} is an a priori result and aims at linking the error between the correction operators to the error between the coefficients.
This is in sharp contrast to the previous works, which use $E_{\CQ,T}$ as a practical indicator to steer re-computation of element correctors. 
For this reason, some terms in \cite{HKM20lodperturbed,HM19lodsimilar} are (slightly) different to obtain contrast-independent bounds. 
The result of Proposition \ref{prop:correcerror} can probably be refined in this spirit as well, but we omit this for simplicity.

\paragraph{Choice of linearization points $u^*$.}
The previous discussion has shown that the choice of $u^*$ is crucial for the performance of the method and that, ideally, $u^*$ should be chosen close the exact solution $u$.
With this in mind, let us compare some possible choices of $u^*$ also used in the numerical experiments.

\begin{enumerate}
	\item We can select for $u^*$ the initial value of the nonlinear iteration, for instance $u^*=0$. This results in a multiscale space $V_{H,m}$ that can be computed a priori and yields a very cheap method.
	The overall error of course will only be dominated by the discretization error of order  $H+\beta^m$ if $u^*$ is already close to the exact solution.
	For instance, a similar assumption is also needed to guarantee convergence of Newton's iteration method for the nonlinear problem.
	In particular the choice $u^*=0$ should yield satisfactory results for problems without steep gradients. 
	
	\item Another option is to compute the finite element solution $u_H$ using $V_H$ and to use it as $u^*$.
	Since $V_H$ is associated with a coarse mesh, the computation of $u_H$ is rather cheap.
	Due to the finescale features of $A$, however, $u_H$ alone will not be a satisfactory approximation of $u$ in general, not even in the $L^2(\Omega)$-norm. 
	The finescale features of $A$ are then taken into account in the ensuing LOD solution.
	To estimate the linearization error in this case, we can combine Lemmas \ref{lem:linKacanov} or \ref{lem:linNewton}, respectively, with standard a priori estimates for the finite element solution, see Section~\ref{subsec:fem} for estimates in the $L^2(\Omega)$-norm and $H^1(\Omega)$-semi norm.
	In case of the Newton-type linearization, Lemma \ref{lem:linNewton} also requires a $W^{1,\infty}(\Omega)$-estimate, where we obtain from \cite[Chapter 8]{BrenScott94}
	\[\|u-u_H\|_{W^{1, \infty}(\Omega)}\leq C H \|u\|_{W^{2,\infty}(\Omega)}.\]
	Note that for all finite element error estimates, higher regularity of $u$ is required and that the constants will depend on the finescale parameter $\varepsilon$.
	Computing the LOD solution $u_{H,m}$ with $u^*=u_H$ is notably different from the two-grid approach \cite{Xu96twogridnonlinear} where the FE solution $u_H$ is ``post-processed'' by another (linear) FE solve on a finer mesh.
	
	\item Finally, one can compute a whole cascade of LOD solutions by starting with some $u^*$, computing the solution $u_{H,m}$ to \eqref{eq:lodgal}, and using this solution as new $u^*$.
	For this cascade, we can inductively combine \eqref{eq:errorgalenergy1} and Lemmas \ref{lem:linKacanov} or \ref{lem:linNewton}, respectively, to obtain an a priori error estimate, which consists of the discretization error of order $H+\beta^m$ and the linearization error for the initial choice of $u^*$.
	This straightforward error estimate, however, does not seem to be optimal because the initial linearization error will always remain and one does not exploit that  the LOD solution should be closer to the exact solution with each step.
	For this cascade of LOD solutions, an error indicator in the spirit of Proposition \ref{prop:correcerror} -- with $\CQ_m^u$ replaced by $\CQ_m^{u_{H,m}}$ with the current LOD solution -- can be used to locally determine which element correctors should be recomputed with the new linearization point.
	Since \eqref{eq:lodgal} is solved by an iterative method as well, one can also imagine to already update some element correctors during this nonlinear iteration.
	Such an iterative linearized approach is studied for the nonlinear Helmholtz equation in \cite{MV20lodNLH}, but the application to quasilinear problems is a future research topic of its own.
\end{enumerate}

\section{Numerical experiments}
\label{sec:numexp}
We present the results of several numerical experiments, subject to different multiscale coefficients and nonlinearities. In all cases, the computational domain is $\Omega=[0,1]^2$.
Since no exact solution is known in any of the examples, we compute a reference solution $u_h\in V_h$ using a standard finite element method on a fine mesh $\CT_h$ that resolves all multiscale features.
Specifically, we fix $h=2^{-8}$ and vary $H=2^{-2}, 2^{-3},\ldots 2^{-6}$. We present results for oversampling parameters $m=1,2,3$ and already point out that $m=2,3$ is a sufficient choice in most experiments.
To solve the nonlinear problems, we use Newton's method with  tolerance $10^{-11}$ for the residual as stopping criterion.
Sections \ref{subsec:expperiodic} and \ref{subsec:exprandom} consider nonlinearities of the type $A(x, \nabla u)$, where Assumption \ref{asspt:monotone} is only satisfied in Section \ref{subsec:expperiodic}.
Further, we consider a model for the stationary Richards equation with a quasilinear coefficient of the form $a(x,u)\nabla u$ in Section \ref{subsec:exprichards}.
Note that the resulting nonlinear form $\CB$ is no longer monotone such that the above proof techniques do not directly transfer, see~\cite{AV12quasilin,AV14fehmmquasilinelliptic}. Nevertheless, we use the presented multiscale method with the obvious modifications for \eqref{eq:lodgal} and $\mathfrak{A}:=a(x,0)$ in the corrector problems.
The code is available at Zenodo with doi~10.5281/zenodo.4311614 and is based on a preliminary implementation of the LOD for linear problems developed at the Chair of Computational Mathematics, University of Augsburg cf.~\cite{MP20lodbook}.

\subsection{Periodic coefficient}
\label{subsec:expperiodic}
\begin{figure}
	\includegraphics[width=0.3\textwidth, trim=20mm 75mm 22mm 75mm, clip=true, keepaspectratio=false]{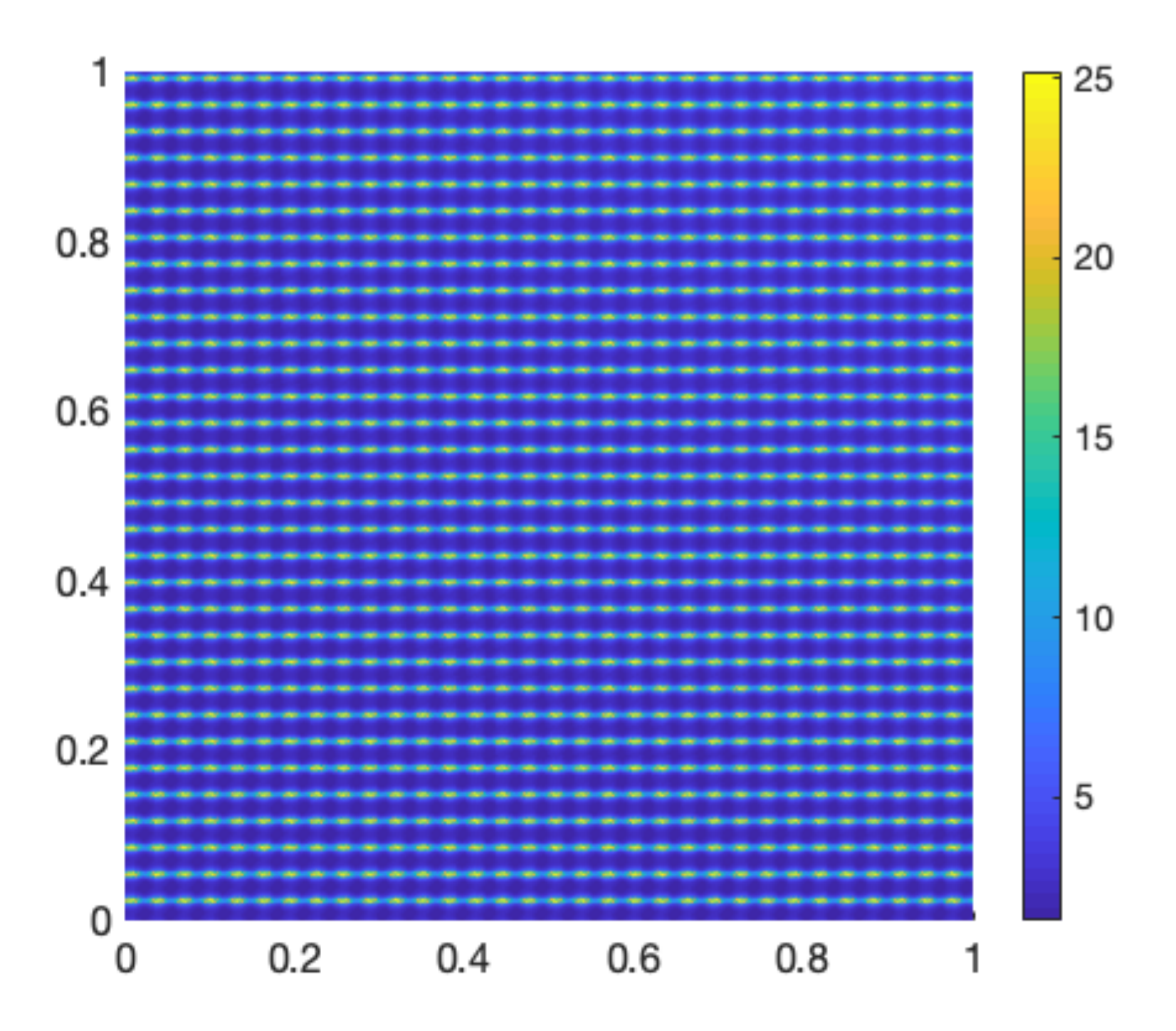}%
	\hspace{1ex}%
	\includegraphics[width=0.3\textwidth, trim=20mm 75mm 22mm 75mm, clip=true, keepaspectratio=false]{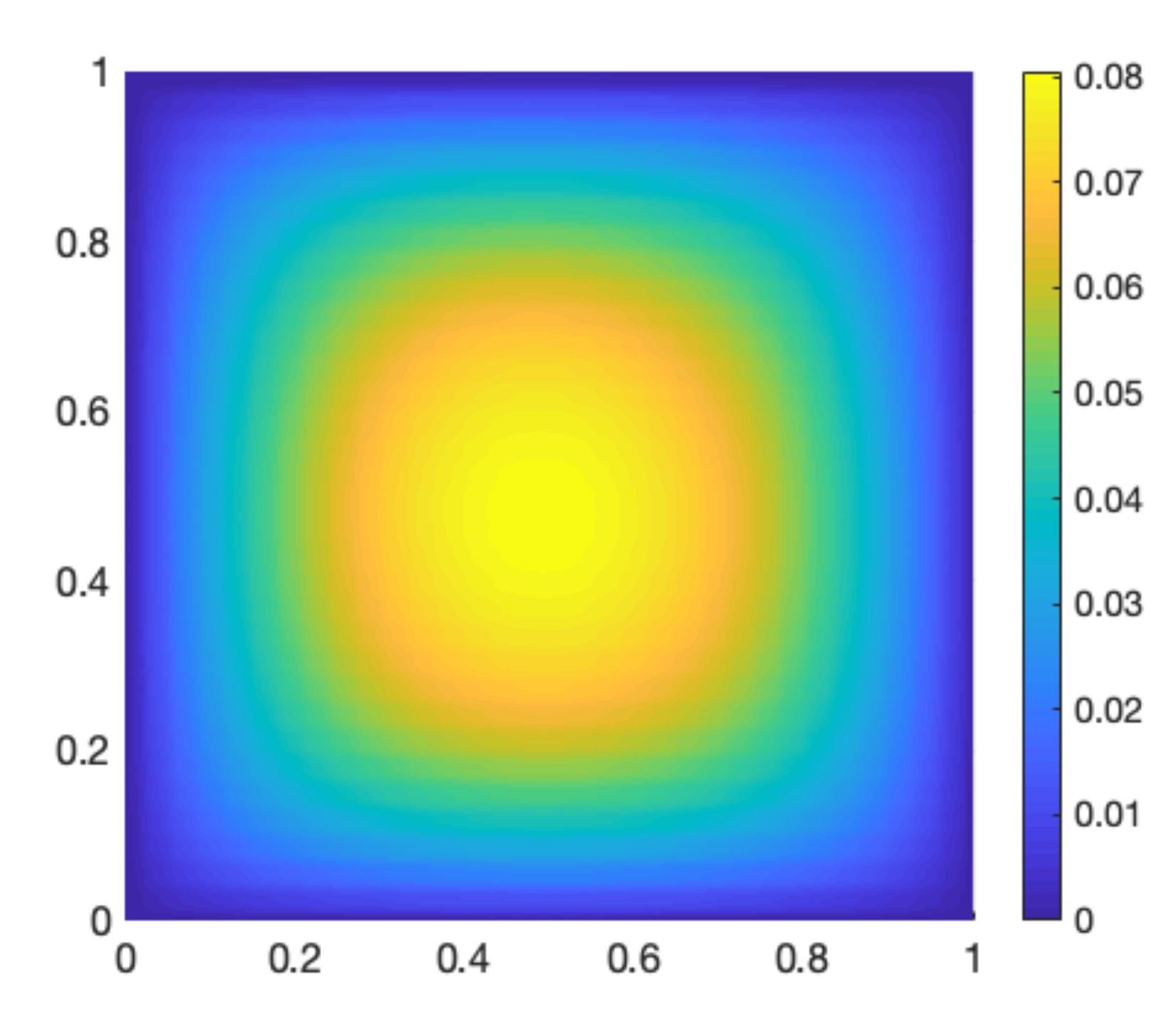}%
	\hspace{1ex}%
	\includegraphics[width=0.3\textwidth, trim=20mm 75mm 22mm 75mm, clip=true, keepaspectratio=false]{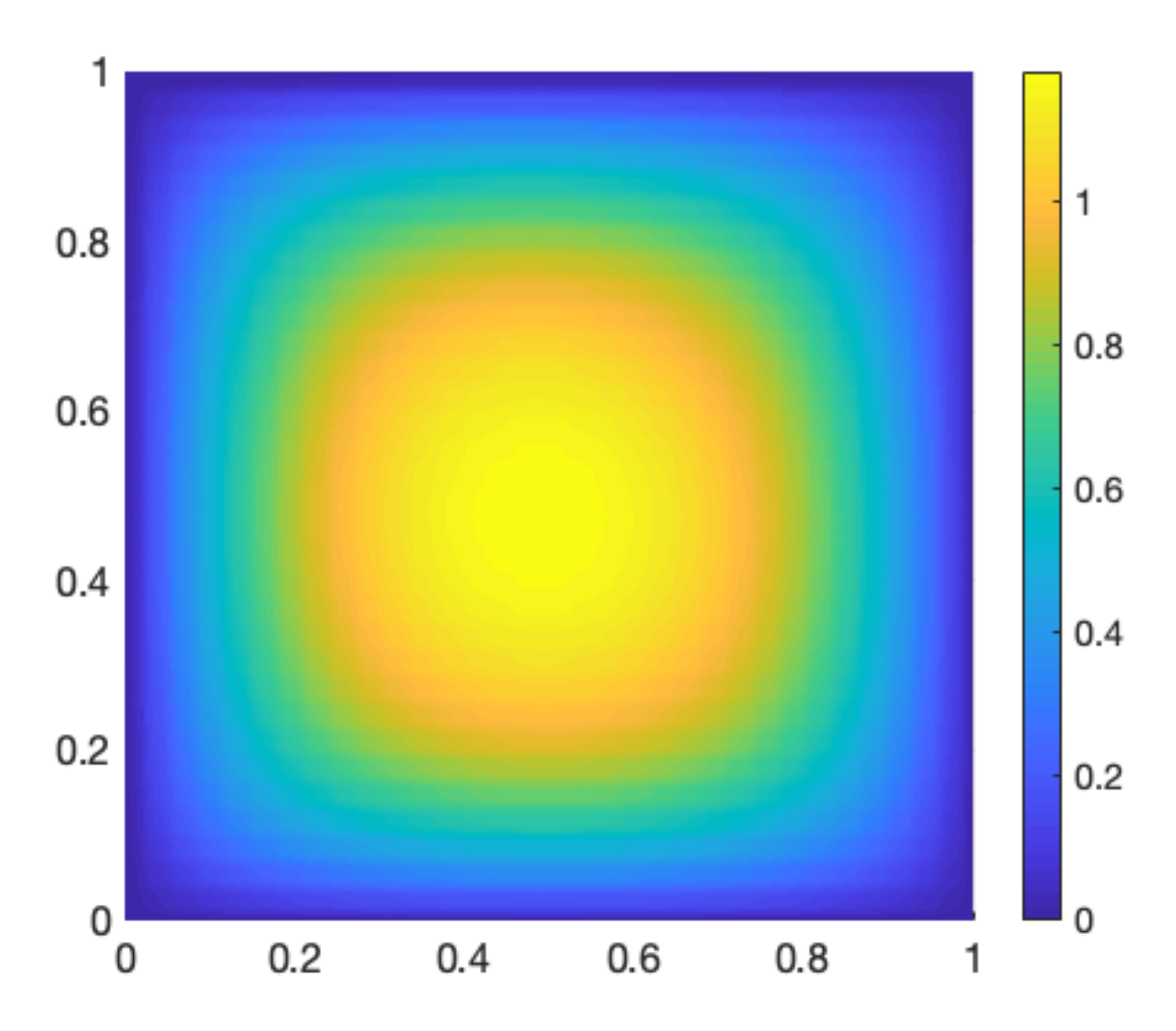}
	\caption{Spatial part of the coefficient (left) and reference solutions $u_h$  for $f_1$(middle) and $f_2$ (right) for Section~\ref{subsec:expperiodic}.}
	\label{fig:coeffsol_periodic}
\end{figure}
We choose a model problem similar to \cite{AH16nonlinelliptic} with nonlinearity
\[A(x,\xi) = \Bigl(1+x_1x_2+\frac{1.1+\frac{\pi}{3}+\sin(2\pi\frac{x}{\varepsilon})}{1.1+\sin(2\pi\frac{x}{\varepsilon})}\Bigr)\Bigl(1+\frac{1}{\bigl(1+|\xi|^2\bigr)^{1/2}}\Bigr)\xi\]
with $\varepsilon=2^{-5}$ and sources $f_1(x)=10\exp(-0.1|x-x_0|^2)$ or $f_2(x)=100\exp(-0.1|x-x_0|^2)$ with $x_0=(0.45, 0.5)^T$.
The coefficient and the reference solutions $u_h$ are depicted in Figure \ref{fig:coeffsol_periodic}. 
We will consider the two right-hand sides $f_1$ and $f_2$ to study the influence of higher values of the solution $u$ and its gradient on the errors.
Note that one can hope for higher regularity of the exact solution $u$ in this experiment.

\begin{figure}%
	\centering
	\includegraphics[width=0.47\textwidth, trim=20mm 75mm 22mm 75mm, clip=true, keepaspectratio=false]{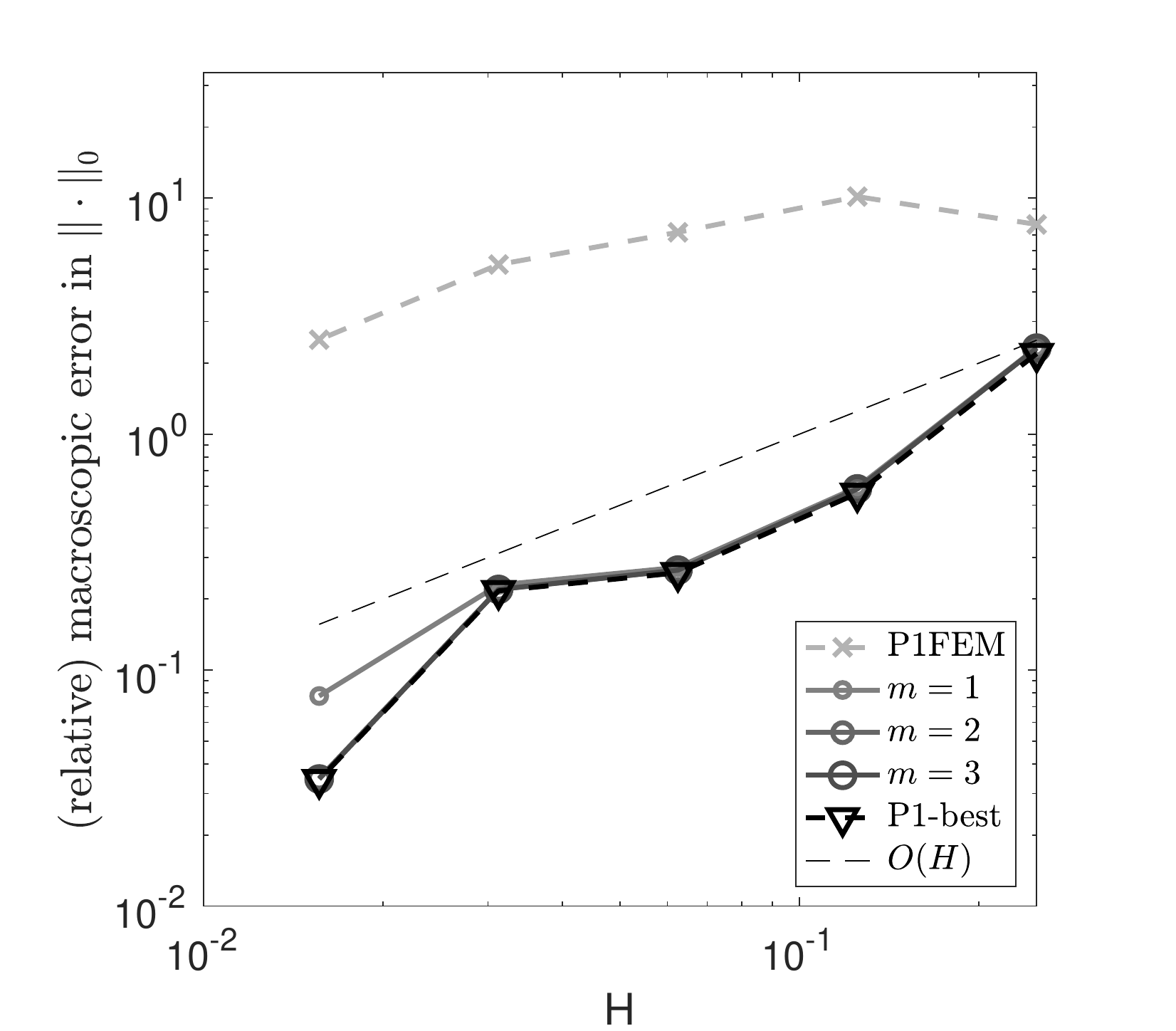}%
	\hspace{2ex}%
	\includegraphics[width=0.47\textwidth, trim=20mm 75mm 22mm 75mm, clip=true, keepaspectratio=false]{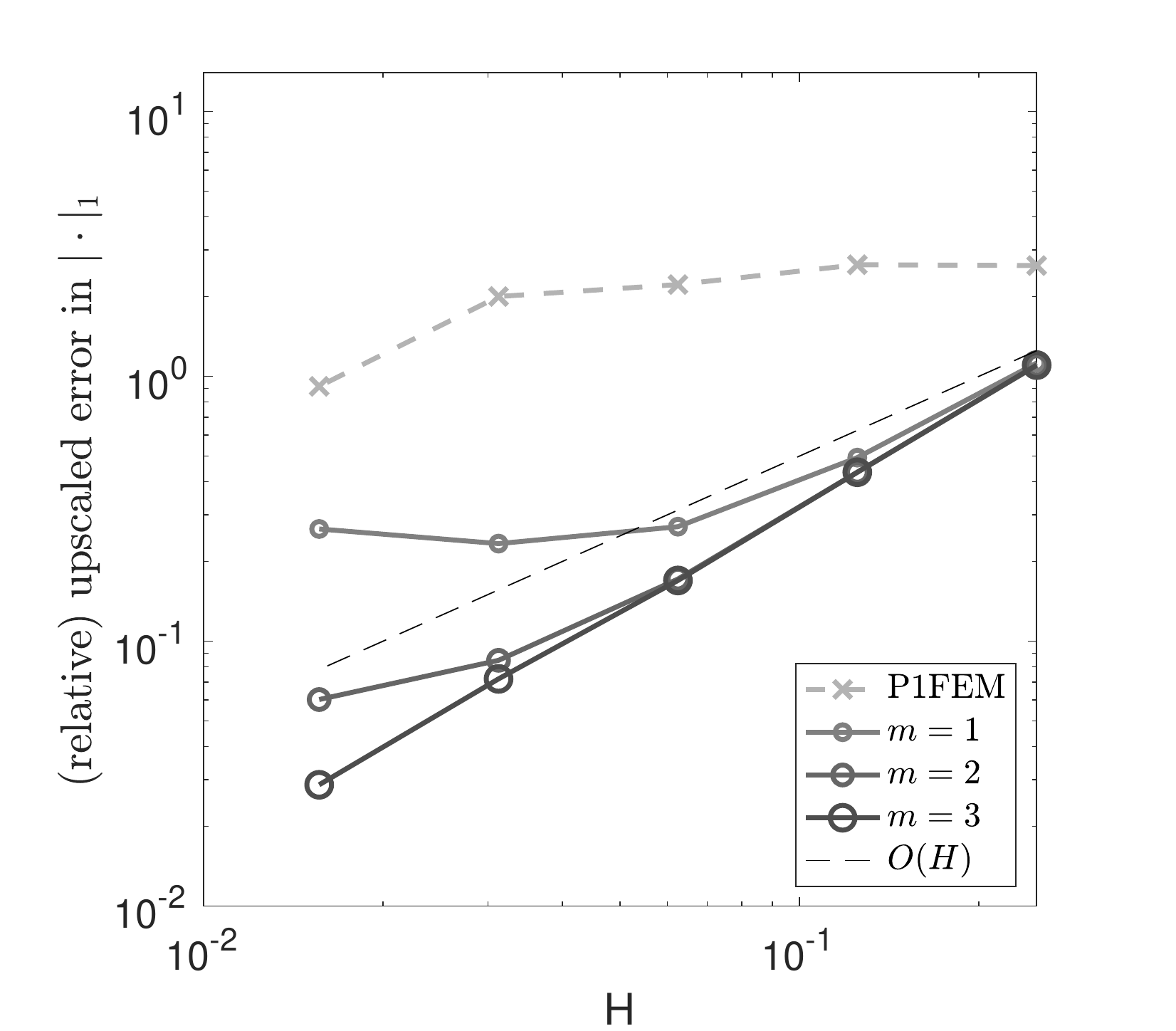}\\
	\includegraphics[width=0.47\textwidth, trim=20mm 75mm 22mm 75mm, clip=true, keepaspectratio=false]{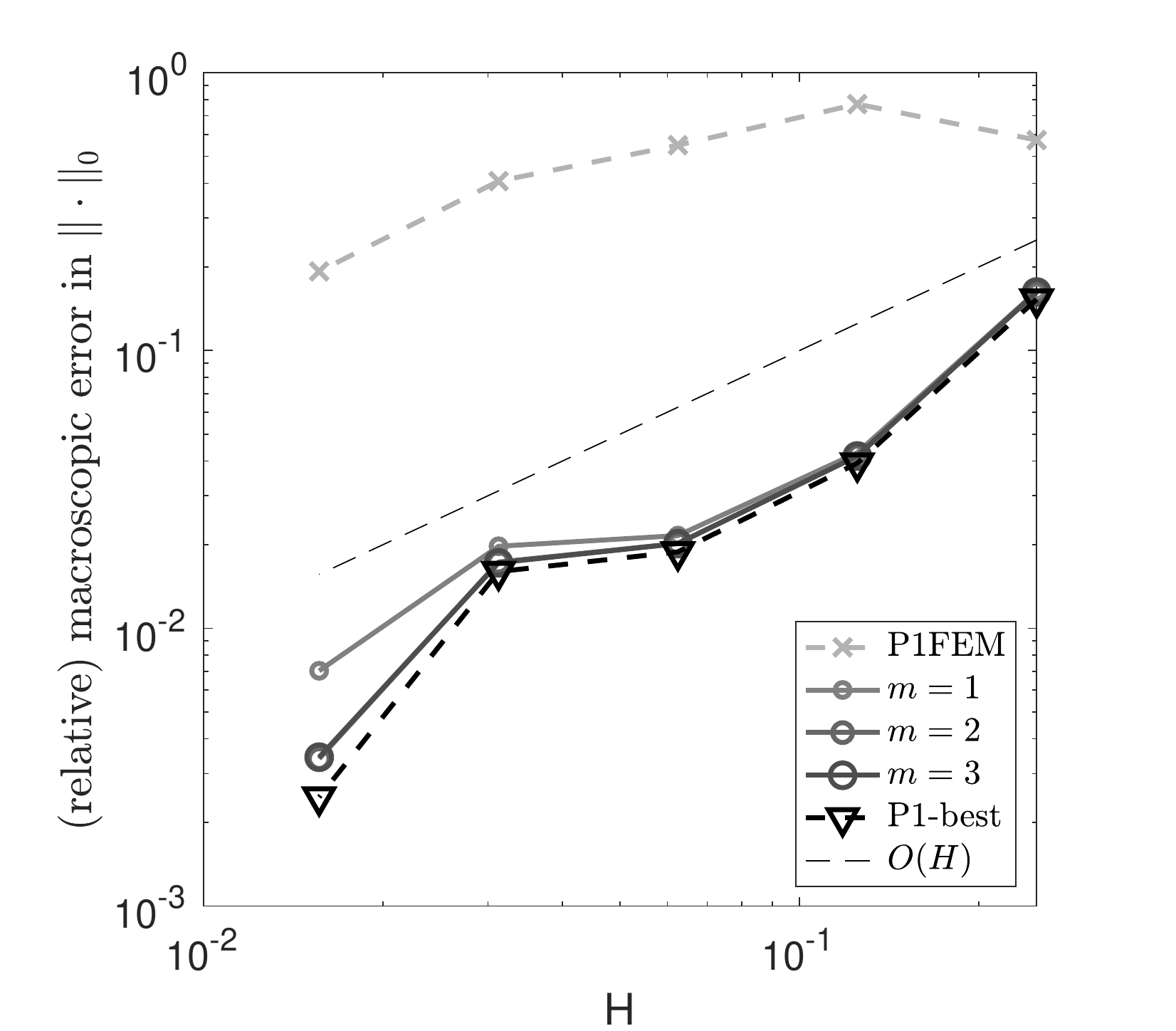}%
	\hspace{2ex}%
	\includegraphics[width=0.47\textwidth, trim=20mm 75mm 22mm 75mm, clip=true, keepaspectratio=false]{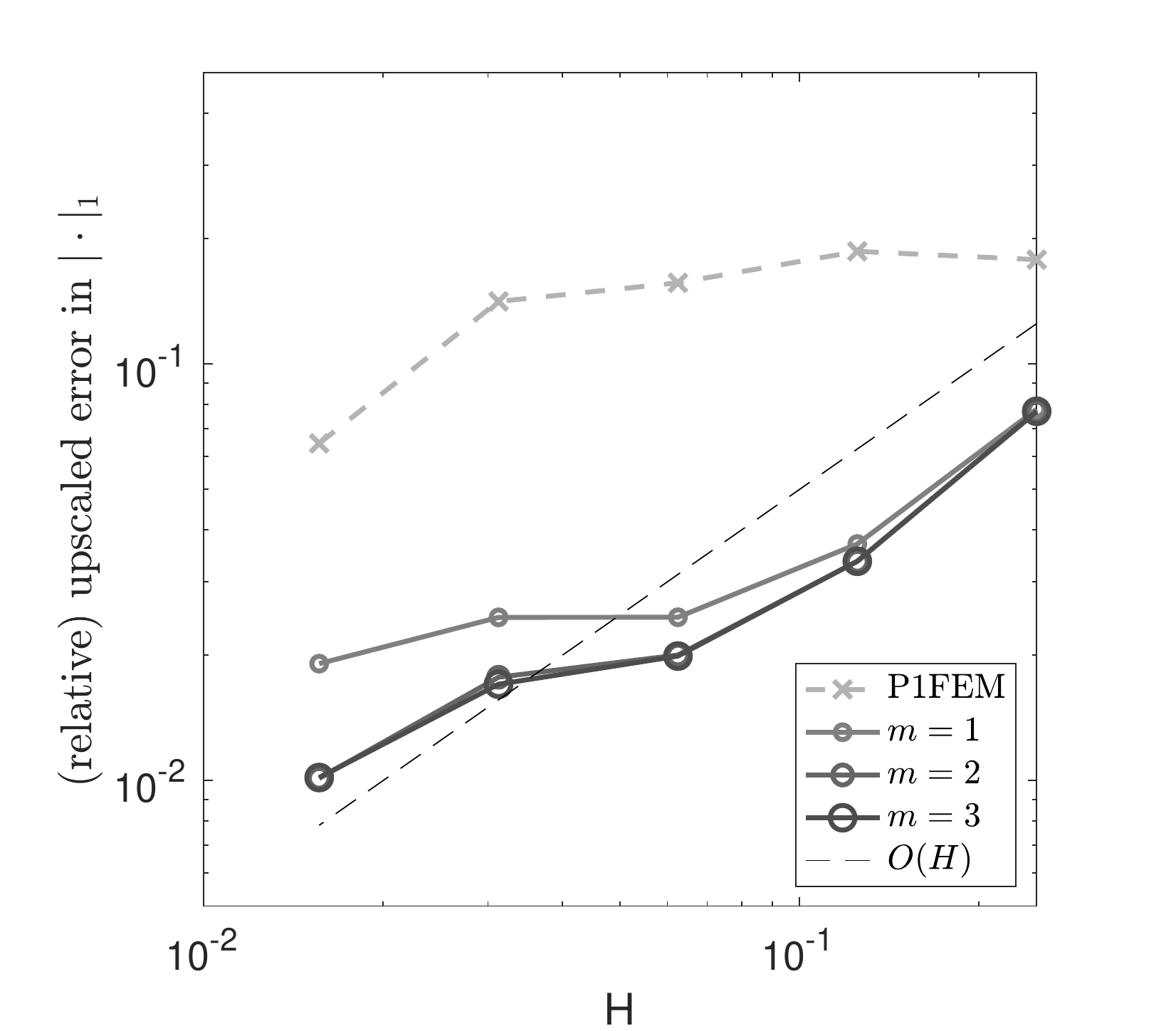}%
	\caption{Convergence histories for the (relative) macroscopic error $e_H$ (in $L^2$-norm)  (left) and the (relative) upscaled error $e_{\operatorname{LOD}}$ (in $H^1$-semi norm) (right) with $f_1$ (top) and $f_2$ (bottom) for the experiment of Section \ref{subsec:expperiodic}.}
	\label{fig:convhist_periodic}
\end{figure}
We use the Galerkin method \eqref{eq:lodgal} and obtain a solution $u_{H, m}\in V_{H, m}$, where the correctors are computed using the Newton-type linearization at $u^*$, which is specified below.
We focus on two (relative) errors in the following: The so-called \emph{(relative) upscaled error} 
\[ e_{\operatorname{LOD}}:=\frac{|u_h-u_{H,m}|_1}{|u_h|_1},\]
for which we expect a linear convergence rate (cf. Theorem \ref{thm:errorgal});
and the so-called \emph{(relative) macroscopic error}
\[e_H:=\frac{\|u_h-I_H u_{H,m}\|_0}{\|u_h\|_0},\]
for which we expect the same behavior as the $L^2$-best approximation in $V_H$ (cf. Corollary~\ref{cor:errorgalFEM}).

For the simple choice $u^*=0$, these two errors are depicted for the two right-hand sides in Figure~\ref{fig:convhist_periodic}.
We note that the (relative) macroscopic errors $e_H$ in the left column closely follow the error of the (relative) $L^2$-best approximation in the space $V_H$  for $m=2,3$ (cf.\ the discussion after Corollary \ref{cor:errorgalFEM}). Since $I_H u_{H, m}$ lies in the same space, we cannot hope for anything better. 
In particular, the reduced convergence rate for $H$ between $\sqrt{\varepsilon}$ and $\varepsilon$ is no defect of the method, but intrinsic to the problem, see also the discussion of this so-called resonance effect in \cite{GP17lodhom}.
We emphasize that in the pre-asymptotic range $\varepsilon\ll H$, the standard finite element method shows no convergence rates in contrast to the multiscale method. 
For approximations in the $H^1$-semi norm,  the (coarse-scale) space $V_H$ is no longer sufficient.
Therefore, we consider the (relative) upscaled error $e_{\operatorname{LOD}}$ in the right column of Figure~\ref{fig:convhist_periodic}.
This error overall converges linearly as expected from Theorem~\ref{thm:errorgal}. 
All in all, the experiment clearly confirms the predicted convergence rates of Theorem \ref{thm:errorgal} and Corollary \ref{cor:errorgalFEM}.

\begin{figure}
	\includegraphics[width=0.47\textwidth, trim=20mm 75mm 22mm 75mm, clip=true, keepaspectratio=false]{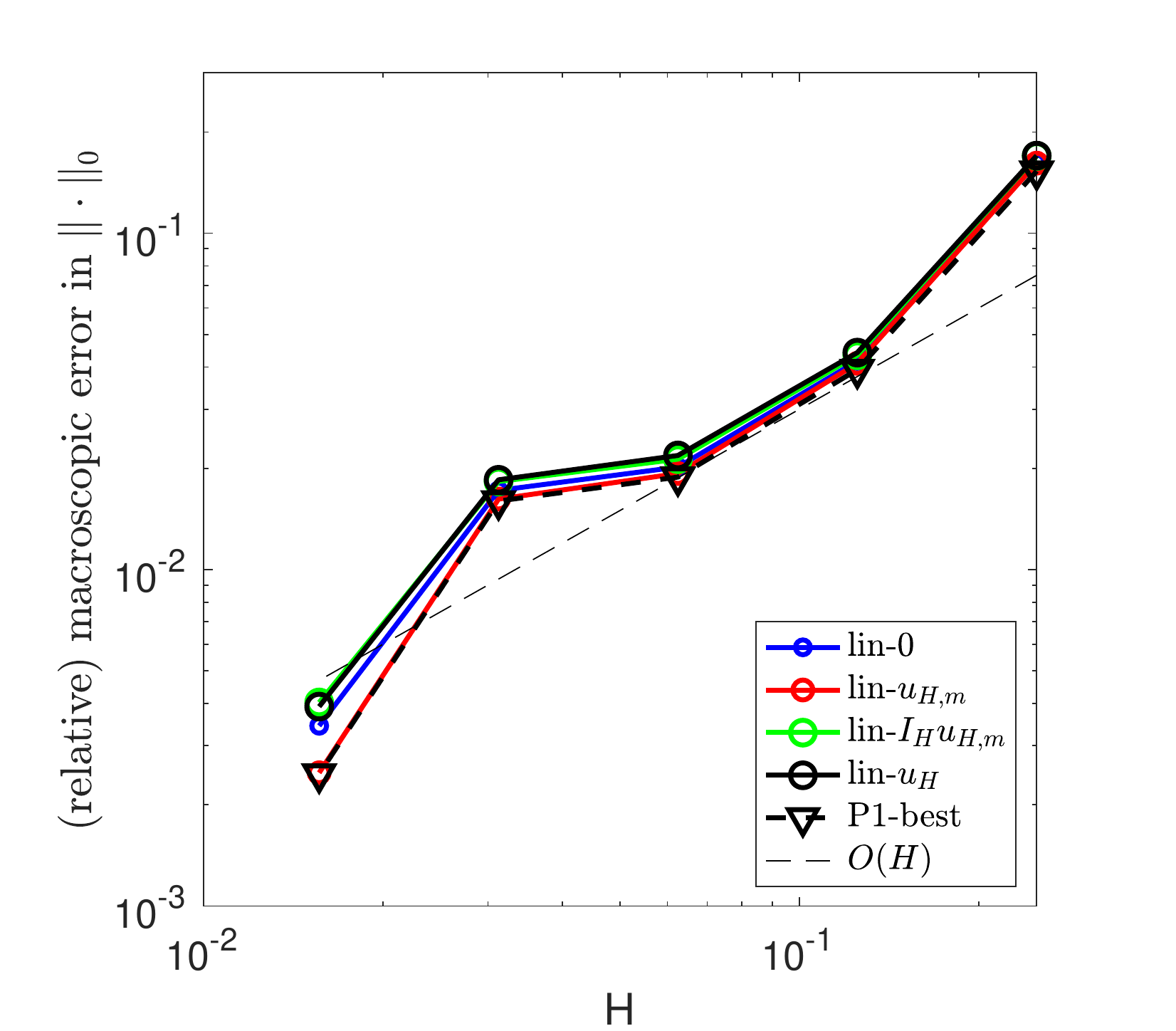}%
	\hspace{2ex}%
	\includegraphics[width=0.47\textwidth, trim=20mm 75mm 22mm 75mm, clip=true, keepaspectratio=false]{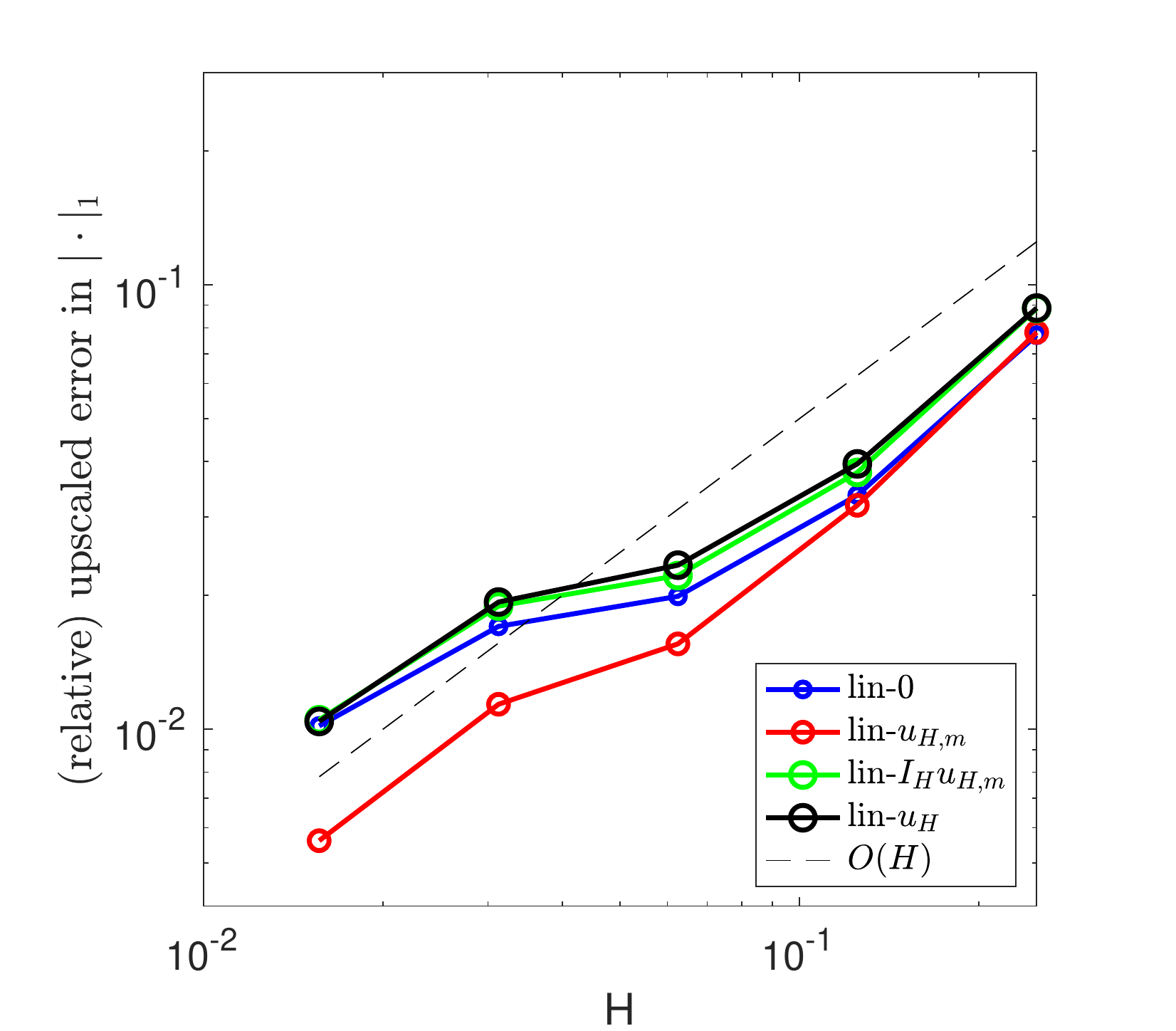}
	\caption{Convergence histories for the (relative) macroscopic error $e_H$ (in $L^2$-norm)  (left) and the (relative) upscaled error $e_{\operatorname{LOD}}$ (in $H^1$-semi norm) (right) with $f_2$ and different choices of the linearization point for the experiment of Section \ref{subsec:expperiodic}.}
	\label{fig:linconvhist_periodic}
\end{figure}
When comparing the top and bottom row of Figure \ref{fig:convhist_periodic}, we also observe that the larger gradients of $u_h$ caused by $f_2$ influence the performance of the method.
In particular $e_{\mathrm{LOD}}$ shows some deviation from the optimal linear convergence.
Therefore, we compare the behavior of $e_H$ and $e_\mathrm{LOD}$ for different linearization points with fixed $m=3$ in Figure~\ref{fig:linconvhist_periodic}.
We consider the following choices of $u^*$ as discussed in Section \ref{subsec:errorlinear}: $u^*=0$, $u^*=u_H$ with the (coarse) FE solution $u_H\in V_H$ and $u^*=u_{H,m}$ as well as $u^*=I_H u_{H,m}$, where $u_{H,m}$ is the LOD solution with linearization at zero.
All these solutions show a similar qualitative behavior, but in the quantitative errors we observe clear differences, especially for $e_\mathrm{LOD}$. 
The choices $u^*=u_H$ and $u^*=I_H u_{H,m}$ result in almost identical results because they lie in the same space $V_H$.
A bit surprisingly, those choices even perform slightly worse than $u^*=0$.  A possible explanation is that the gradients are relevant for the nonlinearity, where the space $V_H$ does not provide sufficient approximations.
Both for $e_H$ and $e_\mathrm{LOD}$, the choice $u^*=u_{H,m}$ performs best, which motivates to study iterative LOD approximations as discussed in Section \ref{subsec:errorlinear} in future research.

\subsection{Random coefficient}
\label{subsec:exprandom}
\begin{figure}
	\includegraphics[width=0.47\textwidth, trim=20mm 75mm 22mm 75mm, clip=true, keepaspectratio=false]{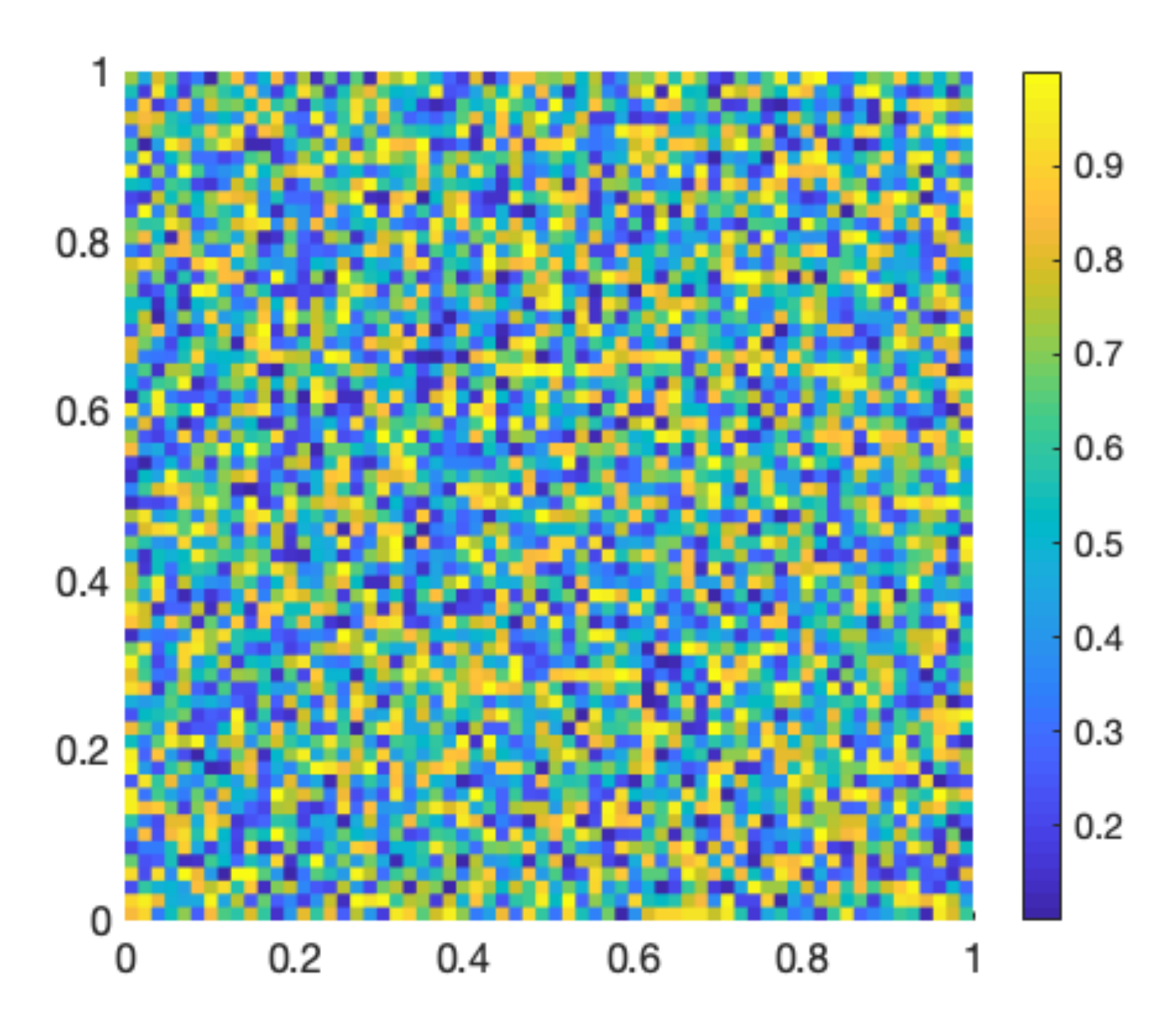}%
	\hspace{2ex}%
	\includegraphics[width=0.47\textwidth, trim=20mm 75mm 22mm 75mm, clip=true, keepaspectratio=false]{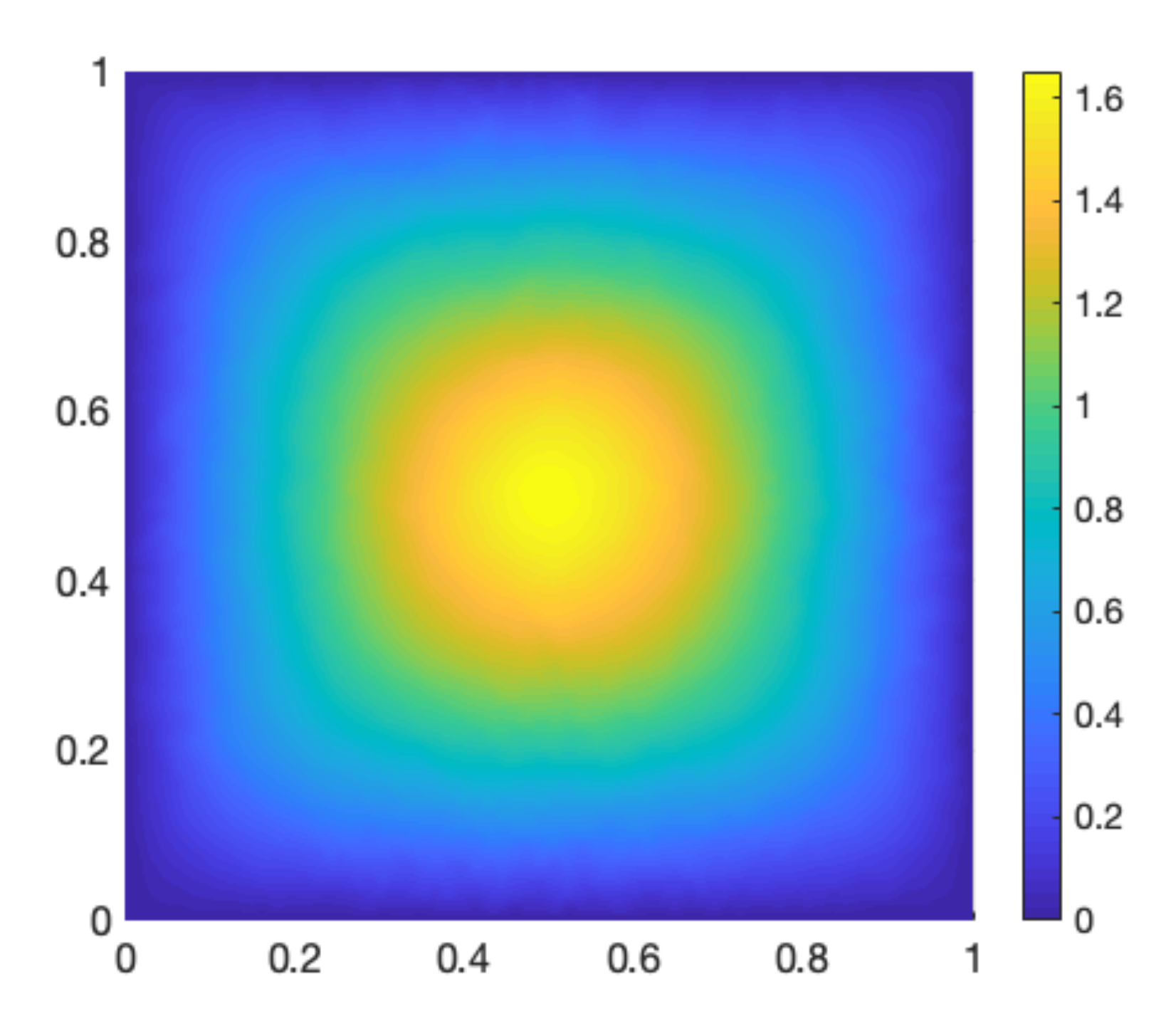}
	\caption{Spatial part of the coefficient (left) and reference solution $u_h$ (right) for the experiment of Section \ref{subsec:exprandom}.}
	\label{fig:coeffsol_random}
\end{figure}
We choose the nonlinear coefficient as
\[A(x,\xi)=c(x)\begin{pmatrix}
	\xi_1+\frac13\xi_1^3\\ \xi_2+\frac13\xi_2^3
\end{pmatrix},\]
where $c(x)$ is piece-wise constant on a quadrilateral mesh $\CT_\varepsilon$ with $\varepsilon=2^{-6}$ and the values are random numbers in $[0.1, 1]$.
The right-hand side is
\[f(x)=\begin{cases}
	5 &x_2\leq 0.1,\\50 &\text{else},
\end{cases}\]
\cite[see][]{Henn11phdhmm} for the nonlinearity and a similar right-hand side.
Note that we clearly cannot expect higher regularity for the exact solution because of the spatial discontinuities in $A$.
Further, $A$ is only locally Lipschitz constant in its second argument. Thus, Assumption \ref{asspt:monotone} is violated and we even have $|A|\to \infty$ for $|\xi|\to \infty$.
The coefficient and corresponding reference solution $u_h$ are depicted in Figure \ref{fig:coeffsol_random}.
\begin{figure}
	\includegraphics[width=0.47\textwidth, trim=20mm 75mm 22mm 75mm, clip=true, keepaspectratio=false]{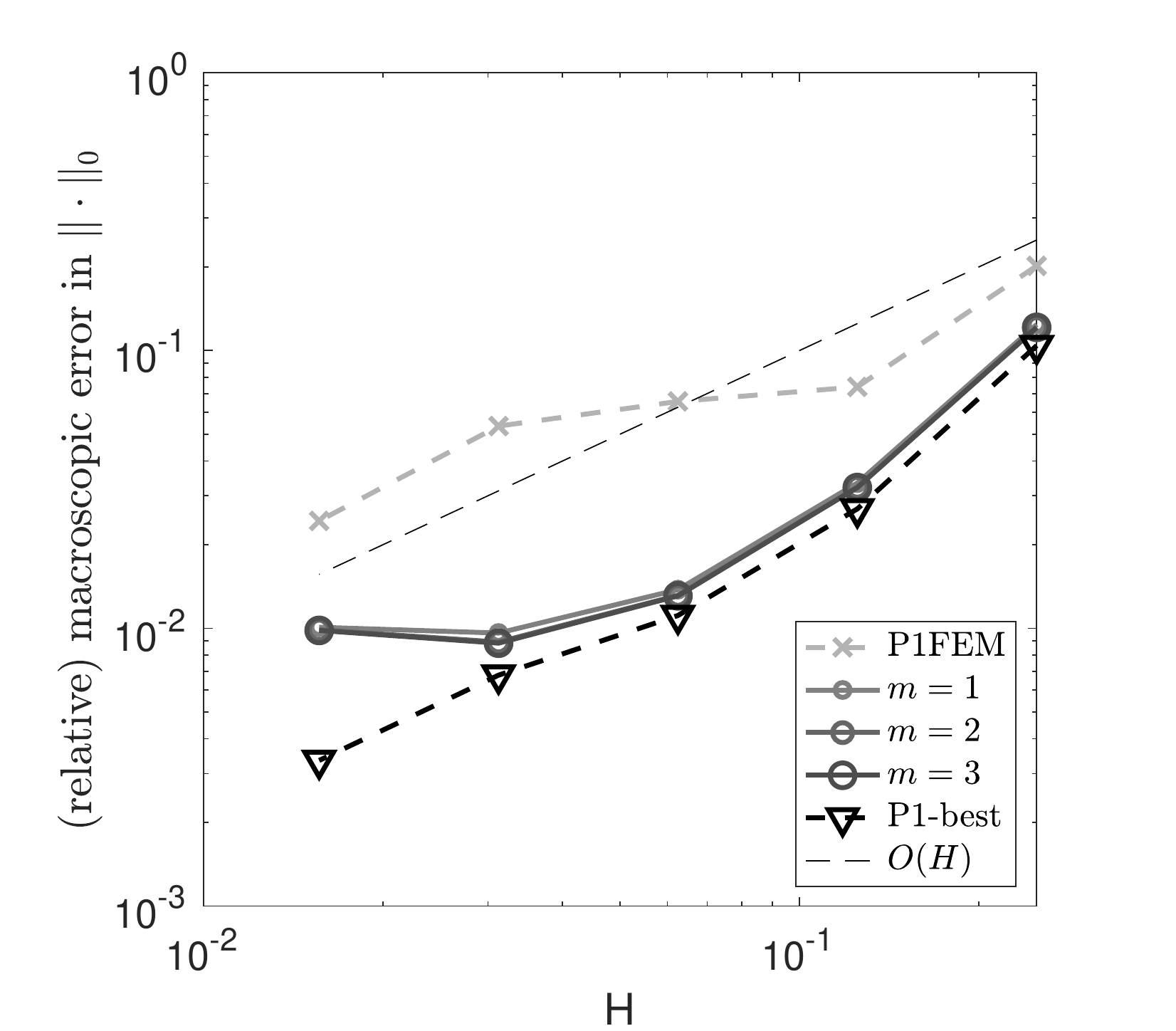}%
	\hspace{2ex}%
	\includegraphics[width=0.47\textwidth, trim=20mm 75mm 22mm 75mm, clip=true, keepaspectratio=false]{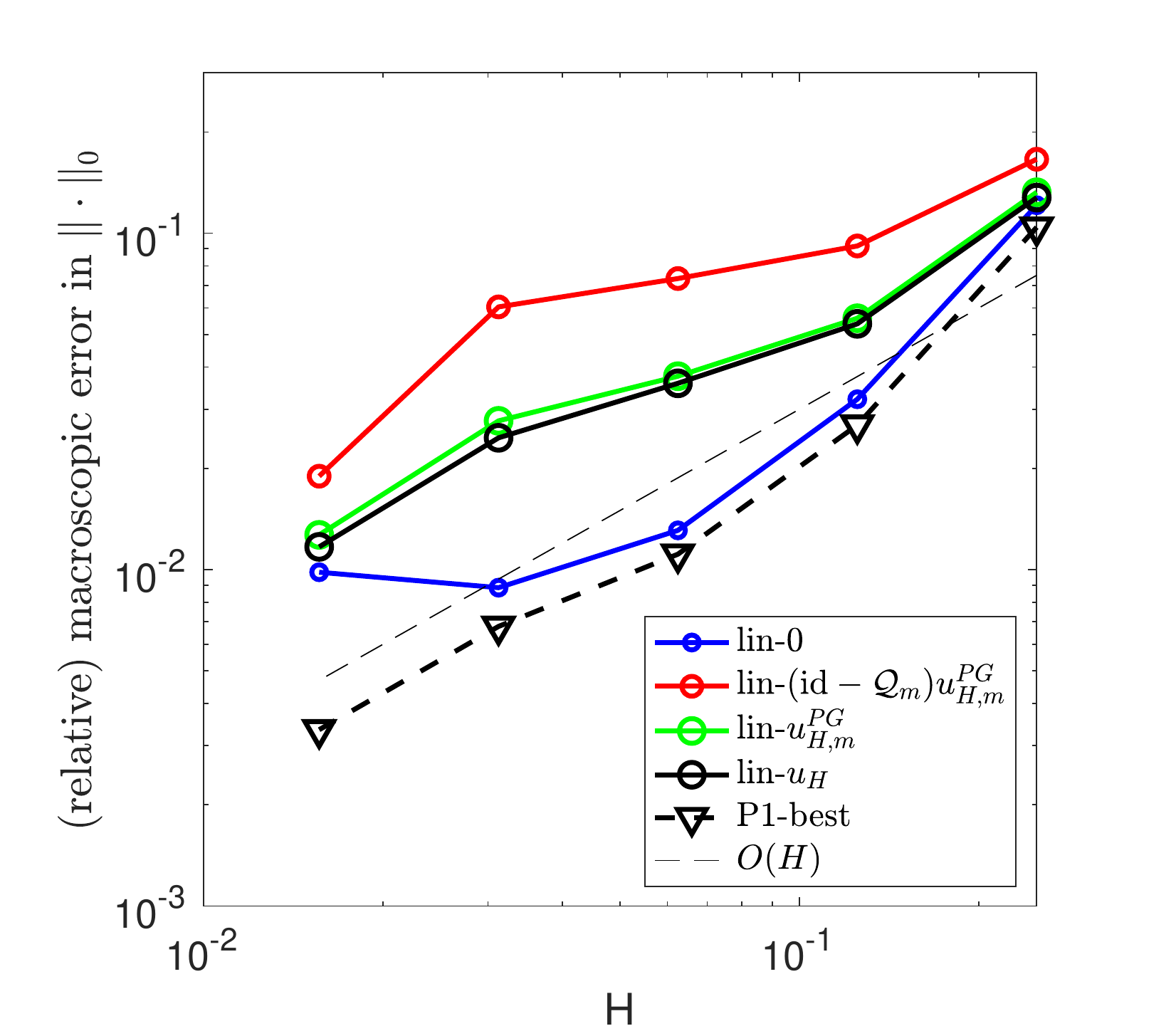}
	\caption{Convergence histories for the (relative) macroscopic error $e_H$ (in $L^2$-norm) for fixed $u^*=0$ and different $m$ (left) and for fixed $m=3$ and different $u^*$ (right) for the experiment of Section \ref{subsec:exprandom}.}
	\label{fig:convhistPGL2_random}
\end{figure}    
For this example, we study the Petrov-Galerkin LOD as briefly discussed in Remark \ref{rem:PGLOD}. Precisely, we compute $u_{H,m}^{PG}\in V_H$ as the solution of
\[\CB(u_{H,m}^{PG}; v_H)=(f, v_H)_\Omega\qquad \text{for all}\quad v_H\in V_{H,m},\]
where $V_{H,m}$ is defined as in the Galerkin case.
Hence, the solution $u_{H,m}^{PG}$ lies in the FE space and we expect the relative macroscopic error
$e_H:=\frac{\|u_h-u_{H,m}\|_0}{\|u_h\|_0}$
to follow the $L^2$-best approximation, cf.~Corollary~\ref{cor:errorgalFEM}.  

We first consider the convergence history of $e_H$ for  $u^*=0$ and different choices of $m$ in Figure \ref{fig:convhistPGL2_random} (left).
The multiscale method performs obviously better than the standard FEM. The error of the Petrov-Galerkin LOD is following the $L^2$-best approximation as expected up to a saturation or stagnation for the last considered mesh. Since this effect occurs for all $m=1,2,3$ and was also observed for $m=4,5$ (data not shown), the linearization error most probably starts to dominate.
As in the previous example, we also consider $e_H$ for fixed $m$ and different $u^*$ in Figure \ref{fig:convhistPGL2_random} (right). The considered choices are $u^*=0$, $u^*=u_H$ with the FE solution $u_H$ on the coarse mesh as before and $u^*=u_{H,m}^{PG}$ as well as $u^*=(\operatorname{id}-\CQ_m)u_{H,m}^{PG}$, where $u_{H,m}^{PG}$ is the Petrov-Galerkin LOD solution with $u^*=0$ and $\CQ_m$ is the corrector computed with $u^*=0$.
The latter two linearization points play roles comparable to $I_H u_{H,m}$ and $u_{H,m}$ in the Galerkin setting discussed in the previous experiment.
Only $u^*=0$ shows the saturation at the end, but nevertheless, this choice of linearization performs best.
We emphasize that the impractical choice of $u^*=u_h$ would lead to an $e_H$ completely following the $L^2$-best approximation error (data not shown). This illustrates the validity of our error estimates also for the Petrov-Galerkin LOD, but also stresses the influence of the chosen linearization.
Moreover, this example confirms and underlines that the present multiscale method does not rely on assumptions such as periodicity or scale separation.

\subsection{Stationary Richards equation}
\label{subsec:exprichards}
\begin{figure}
	\includegraphics[width=0.47\textwidth, trim=20mm 75mm 22mm 75mm, clip=true, keepaspectratio=false]{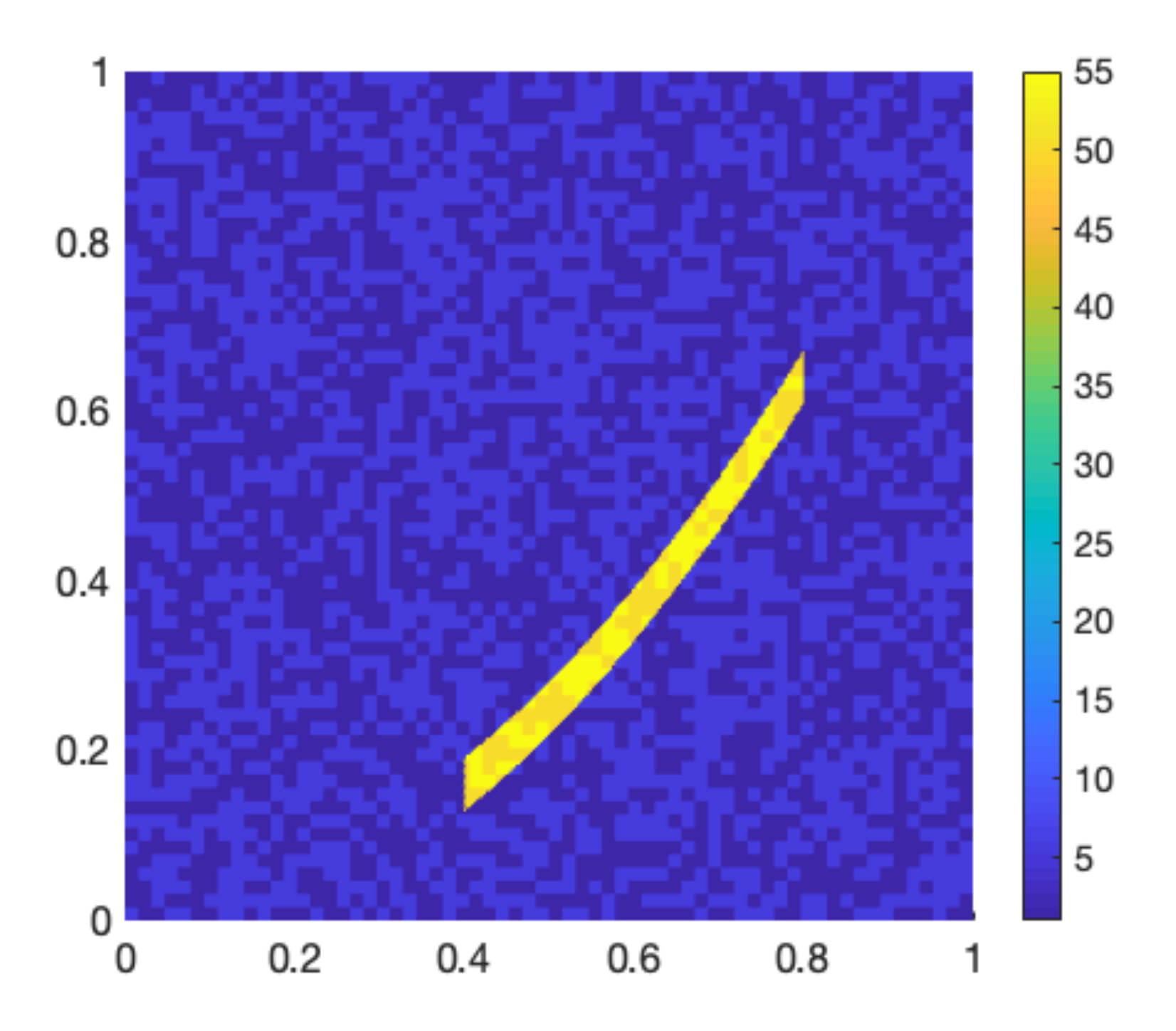}%
	\hspace{2ex}%
	\includegraphics[width=0.47\textwidth, trim=20mm 75mm 22mm 75mm, clip=true, keepaspectratio=false]{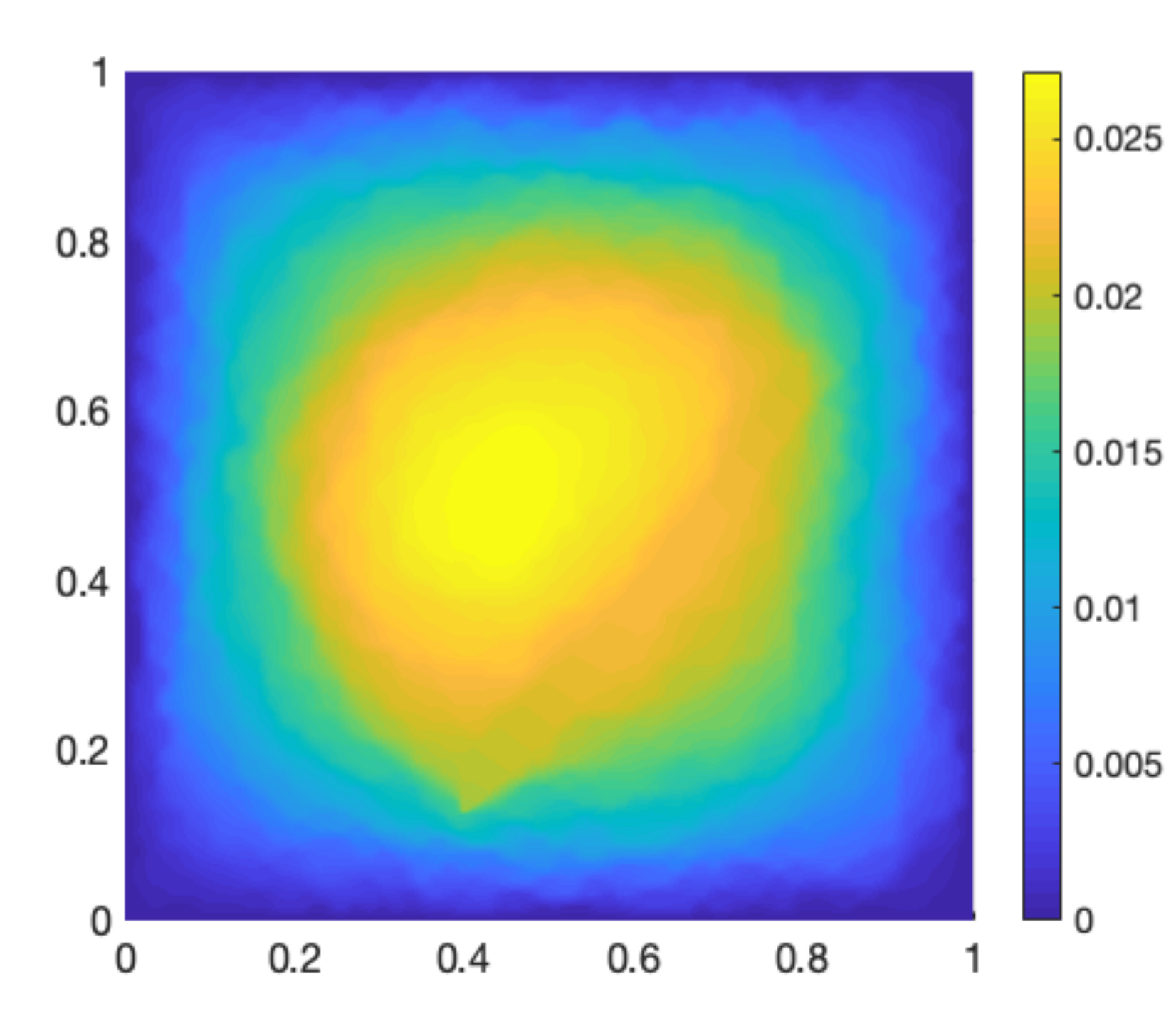}%
	\caption{Coefficient $c(x)$ (left) and reference solution (right) for the experiment of Section \ref{subsec:exprichards}.}
	\label{fig:coeffsol_quasilin}
\end{figure} 
We now test the applicability of our method to quasilinear non-monotone problems, which, for instance, are frequently encountered in (unsaturated) groundwater flow and can be modeled by the (stationary) Richards equation.
Here, we consider a quasilinear coefficient of the form $A(x,u,\nabla u)=c(x)k(u)\nabla u$.
For $c$ we choose a spatial multiscale model with a channel as depicted in Figure \ref{fig:coeffsol_quasilin} left, which is often present in geophysical applications. Note that this can easily be extended to the case of several channels.
For the nonlinearity $k$, we consider the following so-called van Genuchten model \cite{vgen80genuchtenmodel}
\[k(s)=\frac{(1-\alpha|s|(1+(\alpha|s|)^2)^{-1/2})^2}{1+(\alpha|s|)^2}
\]
with $\alpha=0.005$, \cite[see also][]{EGKL12nonlinflowitsolver}.
The right-hand side is 
\[f(x)=\begin{cases}
	0.1 &x_2\leq 0.1,\\1 &\text{else},
\end{cases}\]
The reference solution for the present setting is depicted in Figure \ref{fig:coeffsol_quasilin} right.
Note the influence of the channel, which analytically manifests itself in a low regularity of the solution.  

\begin{figure}
	\includegraphics[width=0.47\textwidth, trim=20mm 75mm 22mm 75mm, clip=true, keepaspectratio=false]{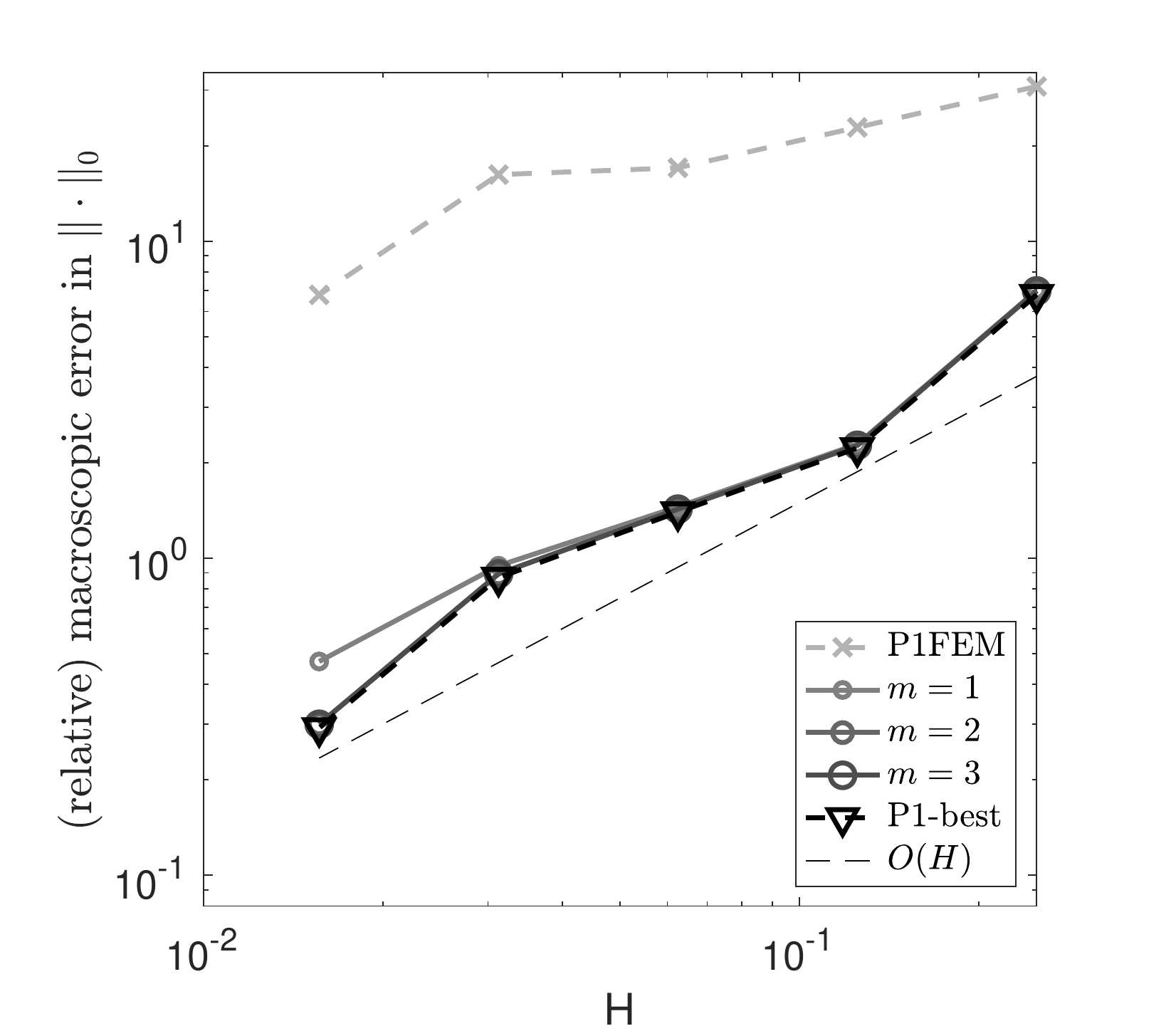}%
	\hspace{2ex}%
	\includegraphics[width=0.47\textwidth, trim=20mm 75mm 22mm 75mm, clip=true, keepaspectratio=false]{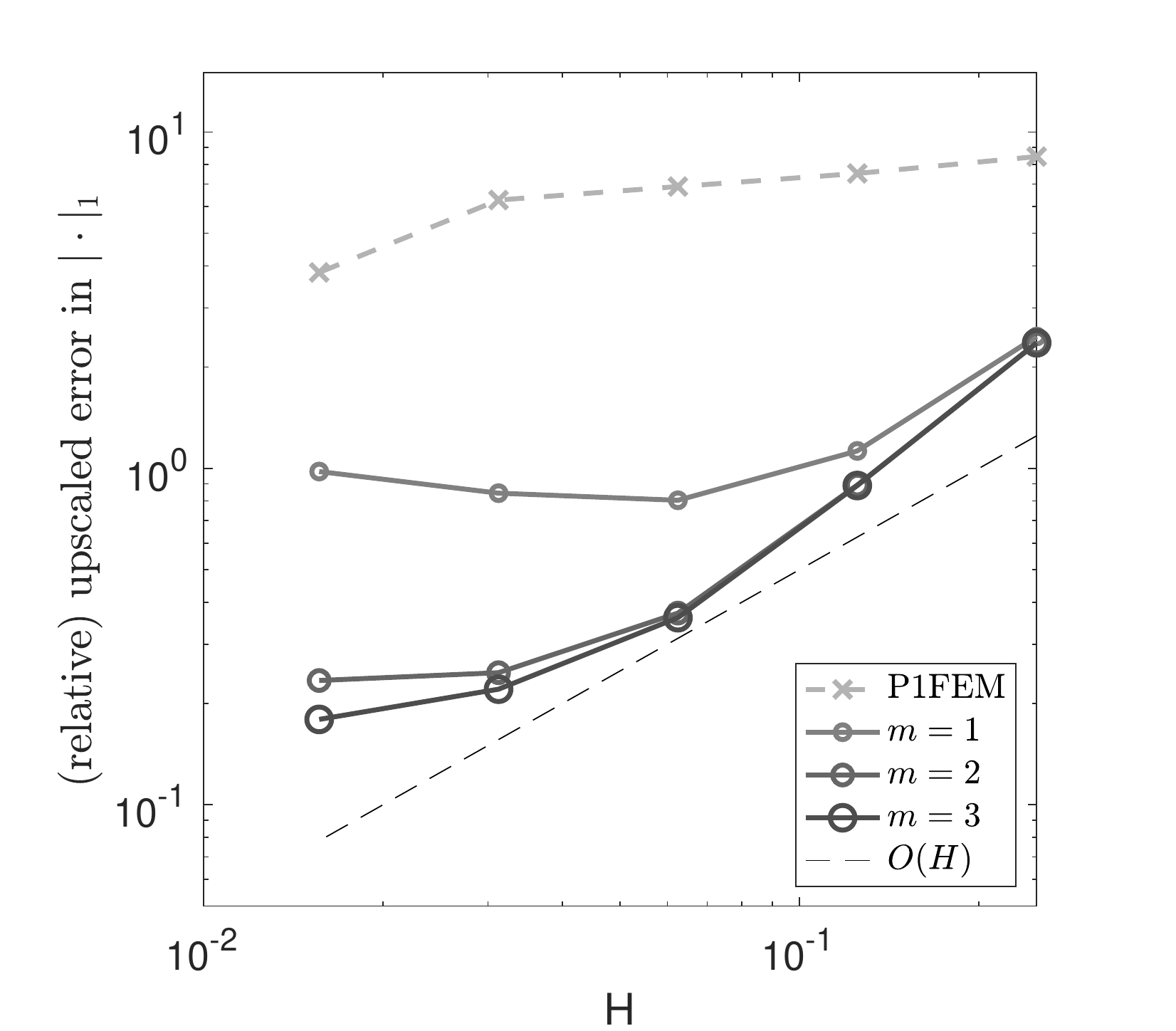}%
	\caption{Convergence histories for the (relative) macroscopic error (in $L^2$-norm) (left) and for the (relative) upscaled error $e_{\operatorname{LOD}}$ (in $H^1$-semi norm) (right) for the experiment of Section \ref{subsec:exprichards}.}
	\label{fig:convhist_quasilin}
\end{figure} 
As a consequence, we observe exactly the (worst-case) convergence rates as predicted by our theory (for the monotone case), but not more. In particular, $e_H$  follows the best-approximation error, which in this case is ``only'' linear as discussed after Corollary \ref{cor:errorgalFEM}.
Furthermore, we see the expected linear convergence of the upscaled error $e_{\operatorname{LOD}}$ up to a slight saturation for small $H$.
This is most probably caused again by a dominating linearization error, which could be cured by a different choice of the linearization.
This experiment clearly underlines the applicability of the approach also beyond the strictly monotone case and indicates that similar convergence rates as in Theorem \ref{thm:errorgal} and Corollary \ref{cor:errorgalFEM} can be expected.

\section*{Conclusion}
We presented a multiscale method for nonlinear monotone elliptic problems with spatial multiscale features.
A problem-adapted multiscale basis is constructed by solving local linear fine-scale problems for each coarse-scale mesh element.
Numerical analysis shows optimal error estimates up to linearization errors and we discussed choices of the linearization. 
Several numerical experiments underline and confirm the applicability of the method as well as the expected convergence rates. We also numerically compared the influence of the chosen linearization on the performance of the method. 
As mentioned, iterative (adaptive) LOD methods where either a cascade of LOD solutions to different linearization points is computed or the correctors are (partially) updated during the nonlinear iteration are interesting extensions of the presented method and will be subject of future research.
Also the numerical analysis for the quasilinear non-monotone problems will be studied in the future.

\section*{Acknowledgments}
We are grateful to D.~Peterseim for fruitful discussion on the subject and for providing a preliminary implementation of the LOD for linear problems. 
We thank the anonymous reviewers for their valuable remarks.

\appendix
\section{$L^2$-error estimate for the Galerkin method}
In this appendix, we prove an $L^2$-estimate for the Galerkin method \eqref{eq:linlodgal}.
\begin{theorem}\label{thm:errorgalL2}
	Let Assumptions \ref{asspt:monotone} and \ref{asspt:linearization} be fulfilled and suppose that $A(x, \nabla u)=A_L(x, \nabla u, \nabla u)$.
	Let $u$ be the solution to \eqref{eq:nonlinearweak} and $u_{H,m}$ the solution to \eqref{eq:lodgal}.
	Then it holds that
	\begin{align*}
		\|u-u_{H,m}\|_0&\lesssim \bigl(H+C_{\operatorname{ol}, m}^{1/2}\beta^m+\sup_{v_H\in V_H}|(\CQ_m-\CQ_m^u)v_H|_1\bigr) |u-u_{H,m}|_1+\|\eta_{\mathrm{lin},1}(u, \widetilde{u}_{H,m})\|_{H^{-1}(\Omega)}
	\end{align*}
	with the linearization error $\eta_{\mathrm{lin},1}$ defined as
	\[\langle \eta_{\mathrm{lin},1}(u,v), \psi\rangle:=\int_\Omega\bigl(A_L(x, \nabla u,\nabla v)-A(x, \nabla v)\bigr)\cdot \nabla \psi\,  dx\]
	and
	where $\widetilde{u}_{H, m}$ is defined via \eqref{eq:ellipticproj} below.
\end{theorem}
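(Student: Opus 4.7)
The strategy mimics the elliptic projection technique used in the analysis of nonlinear FEM, now adapted to the LOD setting. I would first introduce the auxiliary quantity $\widetilde{u}_{H,m}\in V_{H,m}$ as the Galerkin solution of the linear problem obtained by freezing the nonlinearity at the exact solution $u$, namely
\[
\int_\Omega A_L(x,\nabla u,\nabla \widetilde{u}_{H,m})\cdot\nabla v_{H,m}\,dx=(f,v_{H,m})_\Omega\qquad\forall\,v_{H,m}\in V_{H,m}.
\]
This is uniquely solvable by Assumption~\ref{asspt:linearization}. Thanks to the compatibility $A(x,\nabla u)=A_L(x,\nabla u,\nabla u)$ and to \eqref{eq:nonlinearweak}, the exact solution $u$ also satisfies the above identity tested against any $v\in H^1_0(\Omega)$, so $\widetilde{u}_{H,m}$ can be viewed as the Galerkin projection of $u$ onto $V_{H,m}$ with respect to the symmetric bilinear form generated by $\mathfrak{A}^u=\VA_L(\cdot,\nabla u)$.

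Then I would split $\|u-u_{H,m}\|_0\leq \|u-\widetilde{u}_{H,m}\|_0+\|\widetilde{u}_{H,m}-u_{H,m}\|_0$ and treat the two terms separately. For $\widetilde{u}_{H,m}-u_{H,m}$, both summands lie in $V_{H,m}$, so $w:=\widetilde{u}_{H,m}-u_{H,m}$ is admissible as test function in \eqref{eq:lodgal}. Strong monotonicity (Assumption~\ref{asspt:monotone}) yields $\lambda|w|_1^2\leq\int_\Omega[A(x,\nabla\widetilde{u}_{H,m})-A(x,\nabla u_{H,m})]\cdot\nabla w\,dx$, and substituting the Galerkin equation for $u_{H,m}$ together with the defining equation for $\widetilde{u}_{H,m}$ collapses the right-hand side to $-\langle\eta_{\mathrm{lin},1}(u,\widetilde{u}_{H,m}),w\rangle$. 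This gives $|w|_1\lesssim \|\eta_{\mathrm{lin},1}(u,\widetilde{u}_{H,m})\|_{H^{-1}(\Omega)}$, which Friedrichs' inequality transfers to $L^2$.

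For $u-\widetilde{u}_{H,m}$ the problem is now purely linear with coefficient $\mathfrak{A}^u$. The ``natural'' LOD space associated with $\mathfrak{A}^u$ would be $V_{H,m}^u=(\operatorname{id}-\CQ_m^u)V_H$, and the classical linear LOD $L^2$-argument (dual problem posed with $\mathfrak{A}^u$, tested against the LOD quasi-interpolation $(\operatorname{id}-\CQ_m^u)I_H$ of the dual solution, as in \cite{MP14LOD,Pet15LODreview}) produces the $L^2$-enhancement factor $H$, giving a bound of the form $H\,|u-\widetilde{u}_{H,m}|_1$ in this ideal setting. Since $\widetilde{u}_{H,m}$ actually lives in $V_{H,m}$ rather than in $V_{H,m}^u$, the duality argument picks up an additional corrector-mismatch error measured by $\sup_{v_H\in V_H,\,|v_H|_1=1}|(\CQ_m-\CQ_m^u)v_H|_1$, in the spirit of the perturbation analyses of \cite{HM19lodsimilar,HKM20lodperturbed}, while Proposition~\ref{prop:errortrunc} applied to $\CQ^u$ and $\CQ_m^u$ contributes the $C_{\operatorname{ol},m}^{1/2}\beta^m$ term. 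Combining everything and inserting $|u-\widetilde{u}_{H,m}|_1\leq|u-u_{H,m}|_1+|\widetilde{u}_{H,m}-u_{H,m}|_1$, with the second summand absorbed by the monotonicity bound, gives the claimed estimate.

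The main obstacle is the linear LOD $L^2$-step for $u-\widetilde{u}_{H,m}$: we must run the duality argument for the coefficient $\mathfrak{A}^u$ even though the discrete space $V_{H,m}$ is built from the ``wrong'' corrector $\CQ_m$ (associated with $\mathfrak{A}$). Carefully inserting $\CQ_m^u$ next to $\CQ_m$ in the LOD quasi-interpolation of the dual solution, and tracking where the perturbation $\CQ_m-\CQ_m^u$ enters while preserving the $L^2$-enhancement factor $H$, is the delicate point; everything else is a routine combination of monotonicity (as in Theorem~\ref{thm:errorgal}) and standard linear LOD analysis.
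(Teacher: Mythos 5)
Your proposal follows essentially the same route as the paper: you introduce the same auxiliary function $\widetilde{u}_{H,m}$ (your definition via $f$ is, under $A(x,\nabla u)=A_L(x,\nabla u,\nabla u)$, identical to the elliptic projection \eqref{eq:ellipticproj}, as you yourself note), split $u-u_{H,m}$ into $u-\widetilde{u}_{H,m}$ and $\widetilde{u}_{H,m}-u_{H,m}$, bound the latter via strong monotonicity and Galerkin orthogonality to produce $\eta_{\mathrm{lin},1}$, and bound the former with the Aubin--Nitsche duality argument for the linear diffusion problem with coefficient $\mathfrak{A}^u$, inserting $(\operatorname{id}-\CQ^u)I_H$ and $(\operatorname{id}-\CQ_m^u)I_H$ alongside $(\operatorname{id}-\CQ_m)I_H$ so that Proposition~\ref{prop:errortrunc} yields the $C_{\operatorname{ol},m}^{1/2}\beta^m$ term and the corrector mismatch yields $\sup_{v_H}|(\CQ_m-\CQ_m^u)v_H|_1$. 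This matches the paper's proof step by step; the only cosmetic difference is that the paper closes the argument with $|u-\widetilde{u}_{H,m}|_1\lesssim\inf_{v_{H,m}}|u-v_{H,m}|_1\leq|u-u_{H,m}|_1$ rather than your triangle-inequality-plus-absorption, which is equivalent.
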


The first term of the above error estimate presumably is of $O(H^2)$ for the choice $m\approx |\log H|$, see Section~\ref{subsec:errorlod}.
The linearization error $\eta_{\mathrm{lin},1}(u,v)$ can be estimated in the same way as in Lemmas \ref{lem:linKacanov} and \ref{lem:linNewton} by replacing $u^*$ with $u$.
We then need to estimate $|u-\widetilde{u}_{H,m}|_1$ and $\|\nabla(u-\widetilde{u}_{H,m})\|_{L^\infty(\Omega)}$.
As shown below, $|u-\widetilde{u}_{H,m}|_1$ is of the same order as $|u-u_{H,m}|_1$ discussed in Theorem \ref{thm:errorgal}.
However, it is not clear whether $W^{1,\infty}$-estimates for the multiscale space $V_{H,m}$ can be established in the spirit of \cite[Chapter 8]{BrenScott94} to obtain a quadratic rate for $\eta_{\mathrm{lin}, 1}$ in case of a Newton-type linearization.

\begin{proof}
	Let $\widetilde{u}_{H, m}\in V_{H, m}$ be the unique solution to
	\begin{equation}\label{eq:ellipticproj}
		\bigl(\mathfrak{A}^u\nabla \widetilde{u}_{H, m}, \nabla v_{H, m}\bigr)_\Omega=\bigl(\mathfrak{A}^u\nabla u, \nabla v_{H, m}\bigr)_\Omega\qquad \text{for all}\quad v_{H, m}\in V_{H,m},
	\end{equation}
	cf.~\cite{AH16nonlinelliptic}.
	Note that this can equivalently be written as
	\[\bigl(A_L(x, \nabla u, \nabla \widetilde{u}_{H, m}), \nabla v_{H, m}\bigr)_\Omega=\bigl(A_L(x, \nabla u,\nabla u), \nabla v_{H, m}\bigr)_\Omega\qquad \text{for all}\quad v_{H, m}\in V_{H,m}.\]
	We split the error $u-u_{H,m}$ into $u-\widetilde{u}_{H,m}$ and $\widetilde{u}_{H,m}-u_{H,m}$ and estimate both parts separately.
	
	\emph{First step: Estimate of $\widetilde{u}_{H,m}-u_{H,m}$:} By the monotonicity of $A$, Galerkin orthogonality, the assumption $A(x,\nabla u)=A_L(x, \nabla u, \nabla u)$ and \eqref{eq:ellipticproj}, we deduce
	\begin{align*}
		|u_{H,m}-\widetilde{u}_{H,m}|^2_1
		&\lesssim \bigl(A(x, \nabla u_{H, m})-A(x, \nabla \widetilde{u}_{H,m}), \nabla (u_{H,m}-\widetilde{u}_{H, m})\bigr)_\Omega\\
		&=\bigl(A(x, \nabla u)-A(x, \nabla \widetilde{u}_{H,m}), \nabla (u_{H,m}-\widetilde{u}_{H, m})\bigr)_\Omega\\
		&=\bigl(A_L(x, \nabla u, \nabla u)-A(x, \nabla \widetilde{u}_{H,m}), \nabla (u_{H,m}-\widetilde{u}_{H, m})\bigr)_\Omega\\
		&=\bigl(A_L(x, \nabla u, \nabla \widetilde{u}_{H,m})-A(x, \nabla \widetilde{u}_{H,m}), \nabla (u_{H,m}-\widetilde{u}_{H, m})\bigr)_\Omega\\
		&\leq \|\eta_{\mathrm{lin}, 1}(u, \widetilde{u}_{H, m})\|_{H^{-1}(\Omega)}|u_{H,m}-\widetilde{u}_{H,m}|_1.
	\end{align*}
	The $L^2$-estimate is obtained by applying Friedrich's inequality.
	
	\emph{Second step: Estimate of $u-\widetilde{u}_{H,m}$:} This is in principle an $L^2$-estimate for a Galerkin LOD for an elliptic diffusion problem. The main issue, however, is that the multiscale space $V_{H,m}$ is not built with respect to the diffusion tensor $\mathfrak{A}^u=\VA_L(x, \nabla u)$ but with respect to $\mathfrak{A}=\VA_L(x, \nabla u^*)$.
	We first of all note that due to the projection, we have 
	\[|u-\widetilde{u}_{H,m}|_1\lesssim \inf_{v_{H,m}\in V_{H,m}}|u-v_{H,m}|_1\leq |u-u_{H,m}|_1.\]
	Let now $z\in H^1_0(\Omega)$ and $z_{H,m}\in V_{H, m}$ be the solutions of the following dual problems
	\begin{align*}
		\bigl(\mathfrak{A}^u\nabla v, \nabla z\bigr)_\Omega&=(v, u-\widetilde{u}_{H,m})_\Omega\qquad \text{for all}\quad v\in H^1_0(\Omega),\\
		\text{and}\quad \bigl(\mathfrak{A}^u\nabla v_{H,m}, \nabla z_{H,m}\bigr)_\Omega&=(v_{H,m}, u-\widetilde{u}_{H,m})_\Omega\qquad \text{for all}\quad v_{H,m}\in V_{H,m}.
	\end{align*}
	Assumption \ref{asspt:linearization} and Galerkin orthogonality imply
	\begin{align*}
		|z-z_{H,m}|_1&\lesssim \inf_{v_{H,m}\in V_{H,m}}|z-v_{H,m}|_1\leq |z-(\operatorname{id}-\CQ_m)I_Hz|_1\\
		&\leq |z-(\operatorname{id}-\CQ^u)I_H z|_1+|(\CQ^u-\CQ^u_m) I_H z|_1+|(\CQ^u_m-\CQ_m)I_H z|_1,
	\end{align*}
	where the second term is estimated with Proposition \ref{prop:errortrunc}.
	Employing \eqref{eq:IHstab} and the a priori (stability) estimate for $z$, the last term yields
	\[|(\CQ^u_m-\CQ_m)I_H z|_1\lesssim \Bigl(\sup_{v_H\in V_H}|(\CQ^u_m-\CQ_m)v_H|_1\Bigr)\|u-\widetilde{u}_{H,m}\|_0.\]
	For the first term, we obtain with the ellipticity of $\mathfrak{A}^u$ as well as the definitions of $\CQ^u$ and $z$ that
	\begin{align*}
		|z-(\operatorname{id}-\CQ^u) I_H z|_1^2&\lesssim \bigl(\mathfrak{A}^u \nabla(z-(\operatorname{id}-\CQ^u)I_H z), \nabla(z-(\operatorname{id}-\CQ^u)I_H z)\bigr)_\Omega\\
		&=\bigl(\mathfrak{A}^u\nabla z, \nabla(z-(\operatorname{id}-\CQ^u)I_H z)\bigr)_\Omega\\
		&=(u-\widetilde{u}_{H,m}, z-(\operatorname{id}-\CQ^u)I_H z)_\Omega\\
		&\lesssim H\, \|u-\widetilde{u}_{H, m}\|_0\,|z-(\operatorname{id}-\CQ^u)I_H z|_1.
	\end{align*}
	Combining the foregoing estimates, we conclude
	\[|z-z_{H,m}|_1\lesssim \bigl(H+C_{\operatorname{ol}, m}^{1/2}\beta^m+\sup_{v_H\in V_H}|(\CQ_m-\CQ_m^u)v_H|_1\bigr) \|u-\widetilde{u}_{H,m}\|_0.\]
	Finally, the definition of the dual problems yields
	\begin{align*}
		\|u-\widetilde{u}_{H,m}\|_0^2&=\bigl(\mathfrak{A}^u\nabla (u-\widetilde{u}_{H,m}), \nabla z\bigr)_\Omega\\
		&=\bigl(\mathfrak{A}^u\nabla (u-\widetilde{u}_{H,m}), \nabla (z-z_{H,m})\bigr)_\Omega\\
		&\lesssim |z-z_{H,m}|_1|u-\widetilde{u}_{H,m}|_1,
	\end{align*}
	which in combination with the already derived estimates finishes the proof. 
\end{proof}

\section{Proof of Proposition \ref{prop:correcerror}}
\label{app:proofcorrecerror}

The proof of Proposition \ref{prop:correcerror} simply relies on the definition of the element correctors and is similar to the results in \cite{HKM20lodperturbed,HM19lodsimilar}.

\begin{proof}[Proof of Proposition \ref{prop:correcerror}]
	Let $T\in \CT_H$ and $v_H\in V_H$ be fixed.
	Abbreviate $w:=(\CQ_{T,m}-\CQ_{T,m}^u)v_H$ and note that $w\in W(\UN^m(T))$.
	We deduce by the definition of $\CQ_{T,m}$ and $\CQ_{T,m}^u$ that
	\begin{align*}
		|w|_1^2&\lesssim \bigl(\widehat{\mathfrak{A}^u}\nabla w, \nabla w\bigr)_{\UN^m(T)}\\
		&=\bigl(\widehat{\mathfrak{A}^u}\nabla\CQ_{T,m} v_H, \nabla w\bigr)_{\UN^m(T)}-\bigl(\widehat{\mathfrak{A}}\nabla\CQ_{T,m} v_H, \nabla w\bigr)_{\UN^m(T)}+\bigl(\widehat{\mathfrak{A}}\nabla v_H, \nabla w\bigr)_T-\bigl(\widehat{\mathfrak{A}^u}\nabla v_H, \nabla w\bigr)_T\\
		&=\bigl((\widehat{\mathfrak{A}}-\widehat{\mathfrak{A}^u})(\chi_T\nabla -\nabla \CQ_{T,m})v_H, \nabla w\bigr)_{\UN^m(T)}\\
		&\leq \|(\widehat{\mathfrak{A}}-\widehat{\mathfrak{A}^u})(\chi_T\nabla -\nabla \CQ_{T,m})v_H\|_{0,\UN^m(T)}\, |w|_{1}.
	\end{align*}
	We then proceed as follows
	\begin{align*}
		|w|_1^2&\lesssim \|(\widehat{\mathfrak{A}}-\widehat{\mathfrak{A}^u})(\chi_T\nabla -\nabla \CQ_{T,m})v_H\|_{0,\UN^m(T)}^2\\
		&\leq \max_{\psi|_T, \psi\in V_H}\frac{\|(\widehat{\mathfrak{A}}-\widehat{\mathfrak{A}^u})(\chi_T\nabla -\nabla \CQ_{T,m})\psi\|_{0,\UN^m(T)}^2}{|\psi|_{1,T}^2}\, |w|_{1,T}^2\\
		&\leq \sum_{T^\prime\in\CT_H, T^\prime\subset \UN^m(T)}\|\widehat{\mathfrak{A}}-\widehat{\mathfrak{A}^u}\|_{L^\infty(T^\prime)}^2 \max_{\psi|_T, \psi\in V_H}\frac{\|(\chi_T\nabla -\nabla \CQ_{T,m})\psi\|_{0,T^\prime}^2}{|\psi|_{1,T}^2}\, |w|_{1,T}^2\\
		&=E_{\CQ, T}^2\, |w|_{1,T}^2,
	\end{align*}
	which finishes the proof.
	Note that only in the very first step we hide the lower spectral bound of $\widehat{\mathfrak{A}^u}|_{\UN^m(T)}$ in the notation $\lesssim$, all other estimates are constant-free.
\end{proof}

\end{document}